\newlength{\hchng}
\newlength{\vchng}
\newtheorem{theorem}{Theorem}
\newtheorem{proposition}{Proposition}
\newtheorem{lemma}{Lemma}
\newtheorem{conjecture}{Conjecture}
\newtheorem{corollary}{Corollary}
\newcommand{\Mgn}{M_{g,n}}
\newcommand{\barm}{\overline M}
\newcommand{\Mgnbar}{\barm_{g,n}}
\newcommand{\sm}{\mathcal M}
\newcommand{\sbarm}{\overline {\mathcal M}}
\newcommand{\aut}{\operatorname{Aut}}
\newcommand{\CC}{\mathbb{C}}
\newcommand{\PP}{\mathbb{P}}
\newcommand{\QQ}{\mathbb{Q}}
\newcommand{\ZZ}{\mathbb{Z}}
\newcommand{\td}{\widetilde{D}}
\begin{document}

\title{Divisors in the moduli spaces of curves}
\author{Enrico Arbarello}
\address{Dipartimento di Matematica ``Guido Castelnuovo'', Universit\`a di Roma ``La Sapienza'', P.le Aldo Moro 2, 00185 Roma, Italy}
\email{ea@mat.uniroma1.it}

\author{Maurizio Cornalba}
\address{Dipartimento di Matematica ``Felice Casorati'', Universit\`a di Pavia, Via Ferrata 1, 27100 Pavia, Italy}
\email{maurizio.cornalba@unipv.it}
\thanks{Research partially supported by PRIN 2007 \textit{Spazi di moduli e teoria di Lie}}

\date{}

\maketitle

\section{Introduction}\label{intro}

The calculation by Harer \cite{harer} of the second homology groups of the moduli spaces of smooth curves over $\CC$ can be regarded as a major step towards the understanding of the enumerative geometry of the moduli spaces of curves \cite{mumnum,konts}. However, from the point of view of an algebraic geometer, Harer's approach has the drawback of being entirely transcendental; in addition, his proof is anything but simple. It would be desirable to provide a proof of his result which is more elementary, and algebro-geometric in nature. While this cannot be done at the moment, as we shall explain in this note it is possible to reduce the transcendental part of the proof, at least for homology with rational coefficients, to a single result, also due to Harer \cite{harer_2}, asserting that the homology of $M_{g,n}$, the moduli space of smooth $n$-pointed genus $g$ curves, vanishes above a certain explicit degree. A sketch of the proof of Harer's vanishing theorem, which is not at all difficult, will be presented in Section \ref{high} of this survey. It must be observed that Harer's vanishing result is an immediate consequence of an attractive algebro-geometric conjecture of Looijenga (Conjecture \ref{aff} in Section \ref{high}); an affirmative answer to the conjecture would thus give a completely algebro-geometric proof of Harer's theorem on the second rational homology of moduli spaces of curves.

In this note we describe how one can calculate the first and second rational (co)homology groups of $M_{g,n}$, and those of $\barm_{g,n}$, the moduli space of stable $n$-pointed curves of genus $g$, using only relatively simple algebraic geometry and Harer's vanishing theorem. For $\barm_{g,n}$, this program was carried out in \cite{ac}, where the third and fifth cohomology groups were also calculated and shown to always vanish; in Section \ref{divclasses}, we give an outline of the argument, which uses in an essential way a simple Hodge-theoretic result due to Deligne \cite{deligne3}. In genus zero, we rely on Keel's calculation of the Chow ring of $\barm_{0,n}$; a simple proof of Keel's result in the case of divisors is presented in Section \ref{genus0}. We finally give a new proof of Harer's theorem for $H^2(M_{g,n};\QQ)$; we also recover Mumford's result asserting that $H^1(M_{g,n};\QQ)$ always vanishes for $g\ge 1$. The idea is to use Deligne's Gysin spectral sequence from \cite{deligne2}, applied to the pair consisting of $\barm_{g,n}$ and its boundary $\partial M_{g,n}$. This is possible since $\partial M_{g,n}$ is a divisor with normal crossings in $\barm_{g,n}$, if the latter is regarded as an orbifold. Roughly speaking, the Gysin spectral sequence calculates the cohomology of the open variety $M_{g,n}=\barm_{g,n}\smallsetminus \partial M_{g,n}$ in terms of the cohomology of the strata of the stratification of $\barm_{g,n}$ by ``multiple intersections'' of local components of $\partial M_{g,n}$. Knowing the first and second cohomology groups of the completed moduli spaces $\barm_{g,n}$ makes it possible to explicitly compute the low terms of the spectral sequence, and to conclude. Knowing the first and second homology of the moduli spaces of curves allows one to also calculate the Picard groups of the latter, as done for instance in \cite{acpic}.

\medskip
\section{Boundary strata in $\barm_{g,n}$}\label{strata}

As customary, we denote by $\sbarm_{g,n}$ the moduli stack of stable $n$-pointed genus $g$ curves, and by $\barm_{g,n}$ the corresponding coarse moduli space. It will be notationally convenient to allow the marked points to be indexed by an arbitrary set $P$ with $n=|P|$ elements, rather than by $\{1,\dots,n\}$. The corresponding stack and space will be denoted by $\sbarm_{g,P}$ and $\barm_{g,P}$. Of course, we shall write $\sm_{g,P}$ and $M_{g,P}$ to indicate the open substack and subspace parametrizing smooth curves. By abuse of language, we shall usually view $\sbarm_{g,P}$ and $\sm_{g,P}$ as complex orbifolds.

As is well known, to any stable $P$-pointed curve $C$ of genus $g$ one may attach a graph $\Gamma$, the so-called \it dual graph\rm, as follows. The vertices of $\Gamma$ are the components of the normalization $N$ of $C$, while the half-edges of $\Gamma$ are the points of $N$ mapping to a node or to a marked point of $C$. The edges of $\Gamma$ are the pairs consisting of half-edges mapping to the same node, while the half-edges coming from marked points are called legs. The vertices joined by an edge $\{\ell,\ell'\}$ are those which correspond to the components containing $\ell$ and $\ell'$. The dual graph comes with two additional decorations; the legs are labelled by $P$, and to each vertex $v$ there is attached a non-negative integer $g_v$, equal to the genus of the corresponding component of $N$. We shall denote by $V(\Gamma)$, $X(\Gamma)$, $E(\Gamma)$ the sets of vertices, half-edges, and edges of $\Gamma$, respectively. The following formula holds:
\begin{equation*}
g=h^1(\Gamma)+\sum_{v\in V(\Gamma)}g_v\,.
\end{equation*}
This implies, in particular, that $g$ depends only on the combinatorial structure of $\Gamma$; we are thus justified in calling it the genus of $\Gamma$. The stability condition for $C$ is $2g-2+|P|>0$, and hence can be stated purely in terms of $\Gamma$. We shall say that $\Gamma$ is a stable $P$-pointed graph of genus $g$. Given another $P$-pointed genus $g$ graph $\Gamma'$, an isomorphism between $\Gamma$ and $\Gamma'$ consists of bijections $V(\Gamma)\to V(\Gamma')$ and $X(\Gamma)\to X(\Gamma')$ respecting the graph structures, that is, carrying edges to edges, legs labelled by the same letter into each other, and vertices into vertices of equal genus.

Moduli spaces can be stratified by graph type. By this we mean the following. Fix a stable $P$-pointed genus $g$ graph $\Gamma$. For each vertex $v$ let $P_v$ be the subset of $P$ consisting of all elements labeling legs emanating from $v$, and denote by $H_v$ the set of the half-edges originating from $v$ which are not legs. In particular,
\begin{equation*}
P=
\bigcup_{v\in V(\Gamma)}P_v\,.
\end{equation*}
We denote by $D_\Gamma$ the closure of the locus of points in $\sbarm_{g,P}$ representing stable curves with dual graph isomorphic to $\Gamma$. It easy to see that $D_\Gamma$ is a reduced sub-orbifold of $\sbarm_{g,P}$ and that, in suitable local coordinates, it is locally a union of coordinate linear subspaces of codimension $|E(\Gamma)|$. We denote by $\td_\Gamma$ the normalization of $D_\Gamma$; by what we just observed, $\td_\Gamma$ is smooth.
We also set
\begin{equation*}
\sm_\Gamma=\prod_{v\in V(\Gamma)} \sm_{g_v,P_v\cup H_v}\,,\qquad
\sbarm_\Gamma=\prod_{v\in V(\Gamma)} \sbarm_{g_v,P_v\cup H_v}
\end{equation*}
We may define clutching morphisms
\begin{equation*}
\xi_\Gamma\colon \sbarm_\Gamma
\to \sbarm_{g,P}
\end{equation*}
as follows (cf. \cite{Knudsen}, page 181, Theorem 3.4).
Let $x$ be a point of $\sbarm_{g,P}$, consisting of a $P_v\cup H_v$-pointed curve $C_v$ for each vertex $v\in V(\Gamma)$. Then $\xi_\Gamma(x)$ corresponds to the curve obtained from the disjoint union of the $C_v$ by identifying the points labelled by $\ell$ and $\ell'$, for any edge $\{\ell,\ell'\}$ of $\Gamma$. By construction, the image of
$\xi_\Gamma$ is supported on $D_\Gamma\subset \sbarm_{g,P}$.
The automorphism group $\aut(\Gamma)$ acts on $\sbarm_\Gamma$ in the obvious way. Again by construction, $\xi_\Gamma$ induces a morphism
\begin{equation*}
\widetilde{\xi}_\Gamma\colon \sbarm_\Gamma/ \aut(\Gamma)
\to \sbarm_{g,P}\,,
\end{equation*}
which induces by restriction an isomorphism between $\sm_\Gamma/\aut(\Gamma)$ and an open dense substack of $D_\Gamma$. More generally, one can see that $\sbarm_\Gamma/ \aut(\Gamma)$ is isomorphic to the normalization of $D_\Gamma$, that is, to $\td_\Gamma$.

The graphs giving rise to codimension one strata, that is, the graphs with only one edge, are easily described. There is a single such graph $\Gamma_{irr}$ with only one vertex, and it is customary to denote the corresponding divisor with $D_{irr}$. The other graphs have two vertices, and are all of the form $\Gamma_{a,A}$, where $a$ is an integer such that $0\le a\le g$, and $A$ is a subset of $P$. A point of the corresponding divisors, usually denoted by $D_{a,A}$, consists of an $A$-pointed genus $a$ curve attached at a single point to a ($P\smallsetminus A$)-pointed curve of genus $g-a$.

The union of $D_{irr}$ and of the $D_{a,A}$ is just the boundary $\partial\sm_{g,P}$, that is, the substack of $\sbarm_{g,P}$ parametrizing singular stable curves. This is a \it normal crossings divisor \rm in the sense of stacks, which just means that, in suitable ``local coordinates'', it is locally a union of coordinate hyperplanes. More generally, for any integer $p$, the union of all strata $D_\Gamma$ such that $|E(\Gamma)|=p$ is the locus of points of multiplicity at least $p$ in $\partial\sm_{g,P}$.

Finally, suppose that a stable $P$-pointed genus $g$ graph $\Gamma'$ is obtained from another one, call it $\Gamma$, by contracting a certain number of edges.
In this case we write $\Gamma'<\Gamma$. There are natural morphisms
\begin{equation*}
\xi_{\Gamma',\Gamma}\colon \sbarm_\Gamma\to\sbarm_{\Gamma'}\,,
\end{equation*}
defined as follows. Suppose for simplicity that $\Gamma'$ is obtained from $\Gamma$
by contracting to a point $w$ a set $S$ of edges forming a \it connected \rm subgraph. We define a new graph $\Sigma$ as follows. The edges of $\Sigma$ are those in $S$, the vertices of $\Sigma$ are the end-vertices of edges in $S$, and the legs of $\Sigma$ are the legs of $\Gamma$ originating from vertices of $\Sigma$, or the half edges in $H_v$ that are halves of edges in $E(\Gamma)\smallsetminus S$; the set of the latter will be denoted $H'_v$.
We then have
\begin{equation*}
\aligned
\sbarm_\Sigma&=\prod_{v\in V(\Sigma)} \sbarm_{g_v,P_v\cup H'_v}\\
\sbarm_\Gamma&=\left(\prod_{v\in V(\Gamma)\smallsetminus V(\Sigma)} \sbarm_{g_v,P_v\cup H_v}\right)\times
\sbarm_\Sigma\\
\sbarm_{\Gamma'}&=\left(\prod_{v\in V(\Gamma)\smallsetminus V(\Sigma)} \sbarm_{g_v,P_v\cup H_v}\right)\times
\sbarm_{g_w,Q_w}
\endaligned
\end{equation*}
where
\begin{equation*}
g_w=h^1(\Sigma)+\sum_{v\in V(\Sigma)} g_v\,,\qquad Q_w=\bigcup_{v\in V(\Sigma)}\left(P_v\cup H'_v\right)
\end{equation*}
We then set
\begin{equation*}
\xi_{\Gamma',\Gamma}=(1, \xi_\Sigma)\,,\qquad \xi_\Sigma\colon \sbarm_\Sigma\to \sbarm_{g_w,P_w}\,.
\end{equation*}
By definition we have
\begin{equation*}
\xi_{\Gamma'}\circ\xi_{\Gamma',\Gamma}=\xi_{\Gamma},\qquad\text{if}\quad\Gamma'<\Gamma
\end{equation*}
An important case is the one in which $D_\Gamma$ is a codimension one stratum in $D_{\Gamma'}$. Suppose that there are exactly $k$ edges in
$\Gamma$ having the property that contracting one of them produces a graph isomorphic to $\Gamma'$, and let $F$ be the set of these edges. Then there are exactly $k$ branches of $D_{\Gamma'}$ passing through $D_{\Gamma}$.
In general, there is no natural morphism from $\td_\Gamma$ to $\td_{\Gamma'}$. The two are however connected as follows. Clearly, $F$ is stable under the action of the automorphism group of $\Gamma$. The group $\aut(\Gamma)$ then acts on $\sbarm_\Gamma\times F$ via the product action, and we set
\begin{equation*}
\td_{\Gamma',\Gamma}=(\sbarm_\Gamma\times F)/ \aut(\Gamma)
\end{equation*}
There are two natural mappings originating from $\td_{\Gamma',\Gamma}$: the projection $\pi_{\Gamma',\Gamma}\colon \td_{\Gamma',\Gamma}\to \td_\Gamma=\sbarm_\Gamma/ \aut(\Gamma)$, and the morphism
\begin{equation*}
\widetilde{\xi}_{\Gamma',\Gamma}\colon \td_{\Gamma',\Gamma}\to \td_{\Gamma'}
\end{equation*}
defined as follows. Given a point of $\sbarm_\Gamma$ and an edge $e\in F$, clutching along $e$ produces a point in $\sbarm_{\Gamma'}$, well defined up to automorphisms of $\Gamma'$. This defines a morphism $\sbarm_\Gamma\times F\to \td_{\Gamma'}$, and it is a simple matter to show that in fact this morphism factors through $\td_{\Gamma',\Gamma}$.

\medskip
\section{Tautological classes and relations}\label{tauto}

In the Chow ring and in the cohomology ring of the moduli spaces of curves there are certain natural, or \it tautological\rm, classes. Here we describe those of complex codimension one. First of all, we have the classes of the components of the boundary of $\sbarm_{g,P}$, that is, of the suborbifolds $D_{irr}$ and $D_{a,A}$ introduced in the previous section. We denote these classes by $\delta_{irr}$ and $\delta_{a,A}$, respectively. We write $\delta_b$ to indicate the sum of all the classes $\delta_{a,A}$ such that $a=b$, and $\delta$ to indicate the total class of the boundary, that is, the sum of $\delta_{irr}$ and of all the classes $\delta_{a,A}$. We also set $A^c=P\smallsetminus A$. Next, consider the projection morphism
\begin{equation}\label{defpi}
\pi_{x}\colon \sbarm_{g,P\cup\{x\}}\to \sbarm_{g,P}\,,
\end{equation}
and denote by $\omega=\omega_{\pi_x}$ the relative dualizing sheaf. For every $p\in P$, there is a section $\sigma_p\colon \sbarm_{g,P}\to \sbarm_{g,P\cup\{x\}}$, whose image is precisely $D_{0,\{p,x\}}$. The remaining tautological classes on $\sbarm_{g,P}$ that we will consider are
\begin{equation*}
\aligned
\psi_p&=\sigma_p^*(\omega)\,,\qquad p\in P\,,\\
\kappa_1&={\pi_x}_*(\psi_x^2)\,,
\endaligned
\end{equation*}
plus the Hodge class $\lambda$, which is just the first Chern class of the locally free sheaf ${\pi_x}_*(\omega)$. The Hodge class is related to the others by {\it Mumford's relation} (cf. \cite{mumens}, page 102, just before the statement of Lemma 5.14)
\begin{equation*}
\kappa_1=12\lambda-\delta+\psi\,,
\end{equation*}
where $\psi=\sum \psi_p$; we will not further deal with it in this section.

Consider the clutching morphisms $\xi_{\Gamma_{irr}}$ and $\xi_{\Gamma_{a,A}}$ described in the previous section. For convenience, we shall denote them by $\xi_{irr}$ and $\xi_{a,A}$, respectively. Thus
\begin{equation}\label{defxi}
\aligned
&\xi_{irr}\colon \sbarm_{g-1,P\cup\{q,r\}}\to\sbarm_{g,P}\,,\\
&\xi_{a,A}\colon \sbarm_{a,A\cup\{q\}}\times\sbarm_{g-a,(P\smallsetminus A)\cup\{r\}}\to\sbarm_{g,P}\,.
\endaligned
\end{equation}
We would like to describe the pullbacks of the tautological classes under the morphisms (\ref{defpi}) and (\ref{defxi}). It turns out that the formulas for the pullback under $\xi_{a,A}$ are somewhat messy. On the other hand, for our purposes it will suffice to give pullbacks formulas for the simpler map
\begin{equation}\label{deftheta}
\vartheta\colon\sbarm_{a,A\cup\{q\}}\to\sbarm_{g,P}
\end{equation}
which associates to any $A\cup\{q\}$-pointed genus $a$ curve the $P$-pointed genus $g$ curve obtained by glueing to it a {\it fixed} $A^c\cup\{r\}$-pointed genus $g-a$ curve $C$ via identification of $q$ and $r$. The following result is proved in \cite{ac}, Lemmas 3.1, 3.2, 3.3; for simplicity, in the statement we write $\xi$ in place of $\xi_{irr}$, and $\pi$ in place of $\pi_x$.

\begin{lemma}\label{pullbacks}The following pullback formulas hold:
\begin{itemize}
\item[i)]$\pi^*(\kappa_1)=\kappa_1-\psi_q$;
\item[ii)]$\pi^*(\psi_p)=\psi_p-\delta_{0,\{p,q\}}$ for any $p\in P$;
\item[iii)]$\pi^*(\delta_{irr})=\delta_{irr}$;
\item[iv)]$\pi^*(\delta_{a,A})=\delta_{a,A}+\delta_{a,A\cup\{q\}}$;

\item[v)]$\xi^*(\kappa_1)=\kappa_1$;
\item[vi)]$\xi^*(\psi_p)=\psi_p$ for any $p\in P$;
\item[vii)]$\xi^*(\delta_{irr})=\delta_{irr}-\psi_q-\psi_r
+\displaystyle{\sum_{q\in B, r\not\in B}}\delta_{b,B}$;
\item[viii)]$\xi^*(\delta_{a,A})=\left\{
\begin{matrix}\delta_{a,A}\hfill&&\text{if }
g=2a,\ A=P=\emptyset,\hfill\\
\delta_{a,A}+\delta_{a-1,A\cup\{q,r\}}\hfill&&\text{otherwise}.\hfill
\end{matrix}\right.$

\item[ix)]$\vartheta^*(\kappa_1)=\kappa_1$;
\item[x)]$\vartheta^*(\psi_p)=\left\{
\begin{matrix}\psi_p\hfill&&\text{if }p\in
A,\hfill\\
0\hfill&&\text{if }p\in A^c;\hfill
\end{matrix}\right.$
\item[xi)]$\vartheta^*(\delta_{irr})=\delta_{irr}$.
\end{itemize}

\noindent Suppose $A=P$. Then
\begin{itemize}
\item[xii)]$\vartheta^*(\delta_{b,B})=\left\{
\begin{matrix}
\delta_{2a-g,P\cup\{q\}}-\psi_q\hfill&&\text{if }(b,B)=(a,P)\text{ or }
(b,B)=(g-a,\emptyset),\hfill\\
\delta_{b,B}+\delta_{b+a-g,B\cup\{q\}}\hfill&&\text{otherwise}.\hfill
\end{matrix}
\right.$
\end{itemize}

\noindent Suppose $A\neq P$. Then
\begin{itemize}
\item[xii')]$\vartheta^*(\delta_{b,B})=\left\{
\begin{matrix}
-\psi_q\hfill&&\text{if }(b,B)=(a,A)\text{ or }
(b,B)=(g-a,A^c),\\
\delta_{b,B}\hfill&&\text{if }B\subset A\text{ and }(b,B)\neq (a,A),\hfill\\
\delta_{b+a-g,(B\smallsetminus A^c)\cup\{q\}}\hfill&&\text{if }B\supset A^c\text{ and
}(b,B)\neq (g-a,A^c),\hfill\\ 0\hfill&&\text{otherwise}.\hfill
\end{matrix}
\right.$
\end{itemize}
\end{lemma}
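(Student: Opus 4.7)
The plan is to group the twelve pullback identities by which of the three morphisms is in play, and within each group to first dispose of the ``smooth'' classes $\kappa_1$ and $\psi_p$ via functoriality of the relative dualizing sheaf, and then to handle the boundary classes $\delta_{irr}$ and $\delta_{a,A}$ by combining a set-theoretic preimage analysis with a self-intersection (or normal bundle) argument in the cases where the morphism factors through the boundary divisor in question. The three standing inputs are: Knudsen's identification of $\sbarm_{g,P\cup\{x\}}$ with the universal curve over $\sbarm_{g,P}$, and the resulting comparison formula for relative dualizing sheaves; the identification of the normal bundle to a codimension one boundary stratum at a generic point with the tensor product of the cotangent lines at the two branches of the corresponding node; and the general principle that, when a morphism $f\colon Y\to X$ factors through a Cartier divisor $D\subset X$ with normalization $\td\to D$, the class $f^*\mathcal{O}(D)$ equals the pullback of the normal bundle of $D$ along $Y\to\td$, corrected by the ``extra local branches'' of $D$ meeting the image of $f$ along deeper strata.

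For the forgetful map $\pi$, parts (iii) and (iv) are immediate from the set-theoretic description of preimages in the universal curve: $\pi^{-1}(D_{irr})$ is the universal curve over $D_{irr}$ and coincides as a divisor with $D_{irr}$ inside $\sbarm_{g,P\cup\{q\}}$, while $\pi^{-1}(D_{a,A})$ decomposes cleanly into the two smooth branches singled out by which side of the node carries the extra marked point $q$, giving $\delta_{a,A}+\delta_{a,A\cup\{q\}}$. For (ii), Knudsen's comparison shows that under the identification above, $\omega_\pi$ and the pullback of $\omega$ differ by the section divisors $\sigma_p=D_{0,\{p,q\}}$, and pulling back $\psi_p=\sigma_p^*\omega$ through this comparison produces the correction $-\delta_{0,\{p,q\}}$. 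Identity (i) then follows from (ii) by squaring, applying the projection formula for $\pi$, and using the standard intersection numbers of sections and the $\psi$-class at $q$ on the rational tail.

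For the clutching map $\xi=\xi_{irr}$, identities (v) and (vi) reduce at once to the fact that, on the image divisor $D_{irr}$, the universal $P$-pointed family is the family obtained by clutching the universal $(P\cup\{q,r\})$-pointed family, so that both $\kappa_1$ and $\psi_p$ for $p\in P$ are pulled back from the corresponding restricted classes. For (vii), which is the heart of the computation, the excess intersection formula applies since $\xi$ already maps into $D_{irr}$: the class $\xi^*\delta_{irr}$ splits into a normal bundle contribution, canonically $T_q\otimes T_r$ and hence of class $-\psi_q-\psi_r$, plus contributions from the other local branches of $D_{irr}$ through the image of $\xi$. These extra branches correspond on the source to reducible nodes whose removal disconnects $q$ from $r$---giving the sum $\sum_{q\in B,r\notin B}\delta_{b,B}$---and to irreducible non-separating nodes of the source, which remain irreducible after clutching and yield the leading $\delta_{irr}$. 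Identity (viii) is a direct preimage analysis distinguishing the two ways a separating node of the target of type $(a,A)$ can originate from a node of the source; the exceptional line reflects the $q\leftrightarrow r$ symmetry that collapses the two contributions into one precisely when $g=2a$ and $P=A=\emptyset$.

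For the attaching map $\vartheta$, identities (ix)--(xi) are immediate from the fact that the attached factor is the fixed curve $C$: the relative dualizing sheaf and the $\psi$-classes at points $p\in A$ are inherited from the moving genus $a$ component, the $\psi$-classes at points $p\in A^c$ are constants and restrict to $0$, and any irreducible non-separating node of the glued curve must already be present on the moving piece. Formulas (xii) and (xii') again combine ordinary preimage contributions---separating nodes already present on the genus $a$ component that, after attachment, become separating nodes of the prescribed type---with an excess term $-\psi_q$ coming from the normal bundle at the single clutching branch $q$ of the unique boundary divisor $D_{a,A}=D_{g-a,A^c}$ containing the image of $\vartheta$; the split between $A=P$ and $A\neq P$ is the bookkeeping required to decide when the two labellings $(a,A)$ and $(g-a,A^c)$ of that divisor collapse into a single contribution or remain distinct. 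The main obstacle I expect is the normal bundle / excess intersection step used in (vii), (viii), and (xii'): keeping precise track of the local branches of the target boundary through the image, and of the multiplicities and $\ZZ/2$-symmetries that appear whenever an irreducible node has a nontrivial automorphism exchanging its two half-edges.
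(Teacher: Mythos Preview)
The paper does not actually prove this lemma; it simply records the statement and cites \cite{ac}, Lemmas~3.1--3.3, for the proof. So there is no in-paper argument to compare your proposal against.

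That said, your outline is the standard one and matches the strategy of \cite{ac}: handle $\kappa_1$ and the $\psi_p$ by functoriality of the relative dualizing sheaf together with Knudsen's comparison formula, and handle the boundary classes by a set-theoretic preimage analysis, supplemented by an excess-intersection (normal bundle) contribution whenever the morphism already factors through the divisor in question. Your identification of the normal bundle of a boundary divisor at a generic point with the tensor product of the tangent lines at the two branches of the node---so that it contributes $-\psi_q-\psi_r$ in (vii) and $-\psi_q$ in (xii)/(xii')---is the key technical input, and you have located it correctly. The one place where your sketch is thinner than it should be is the derivation of (i): ``squaring and applying the projection formula'' is the right slogan, but to make it precise you should set up the cartesian square of forgetful maps, use base change to write $\pi^*\kappa_1=(\pi_x')_*(\pi_q')^*\psi_x^2$, apply (ii) on the larger space to get $(\psi_x-\delta_{0,\{x,q\}})^2$, and then push forward using $\psi_x\cdot\delta_{0,\{x,q\}}=0$ and $(\pi_x')_*\delta_{0,\{x,q\}}^2=-\psi_q$. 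This is routine once written out, so there is no genuine gap.
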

\noindent As shown in \cite{ac}, it follows from Lemma \ref{pullbacks} that in low genus there are linear relations between $\kappa_1$, the classes $\psi_p$, and the boundary classes. For instance, in genus zero
\begin{equation}\label{rel0}
\psi_z=\sum_{z\in A\not\ni x,y}\delta_{0,A}\,.
\end{equation}
This formula will be needed in the next section; for the remaining relations we refer to the statement of Theorem \ref{coh_bar}, where they appear as formulas (\ref{kappag2}), (\ref{kappag1}), (\ref{psig1}), and the first formula in (\ref{kpsidirr_rel}).

\medskip
\section{Divisor classes in $\barm_{0,n}$}\label{genus0}

In \cite{Keel}, Keel describes the Chow ring
of the moduli
space of pointed curves of genus 0. For brevity, when writing the boundary divisors of
$\barm_{0,P}$ we will drop the reference to the genus ($g=0$) and we will write
$D_S$ instead of $D_{0,S}$.
Similarly, for the divisor classes we will write
$\delta_S$ instead of $\delta_{0,S}$. Keel's theorem is the following.

\begin{theorem}[Keel \cite{Keel}]{\label{Keel}}The Chow
ring $A^*(\barm_{0,P})$ is generated
by the classes $\delta_S$, with $S\subset P$ and $|S| \geq 2$, $|S^c| \geq 2$. The relations among these generators
are generated by the following:
\begin{itemize}
\item [1)]$\delta_S=\delta_{S^c}$,
\item[2)] For any quadruple of distinct elements $i,j,k,l\in P$,
\begin{equation*}
\sum_{i,j\in S;\,k,l\notin S}\delta_S=\sum_{i,k\in S;\,j,l\notin S}\delta_S=\sum_{i,l\in S;\,j,k\notin S}\delta_S\,.
\end{equation*}
\item[3)] $\delta_S\delta_T=0$, unless $S\subset T$, $S\supset T$, $S\subset T^c$ or $S\supset T^c$.
\end{itemize}
Moreover, $A^*(\barm_{0,P})=H^*(\barm_{0,P}; \ZZ)$.
\end{theorem}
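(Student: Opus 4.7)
The plan is to prove Keel's theorem by induction on $n = |P|$, exploiting the fact that the point-forgetting morphism $\pi_p \colon \barm_{0, P \cup \{p\}} \to \barm_{0,P}$ can be factored as an explicit sequence of blow-ups of smooth codimension-two centers, and then applying the classical blow-up formula for the Chow ring. Crucially, that formula reads formally identically in Chow theory and in integer cohomology, so the two rings will be proved equal in lockstep, yielding the final assertion $A^*(\barm_{0,P}) = H^*(\barm_{0,P};\ZZ)$ as a byproduct.

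The base cases are immediate: $\barm_{0,P}$ is a point when $|P|=3$, and is $\PP^1$ when $|P|=4$, whose Chow ring is standard and in which Keel's relations collapse to trivialities (the three boundary classes $\delta_{\{i,j\}}$ are the three distinct points of $\PP^1$, which are linearly equivalent, which is precisely relation (2)). For the inductive step, realize $\pi_p$ as a tower
\begin{equation*}
\barm_{0, P \cup \{p\}} = X_k \xrightarrow{b_k} X_{k-1} \to \cdots \to X_0 = \barm_{0,P},
\end{equation*}
in which each $b_i$ blows up a smooth codimension-two subvariety $Z_i \subset X_{i-1}$ that is a disjoint union of products of smaller $\barm_{0,Q}$'s. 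Geometrically, each $Z_i$ is a locus where the new marked point $p$ collides with a previously introduced marked point or node, and the exceptional divisor of $b_i$ is identified with a new boundary divisor $\delta_S$, $p \in S$. Together with the pullback formula $\pi_p^*(\delta_A) = \delta_A + \delta_{A \cup \{p\}}$ of Lemma \ref{pullbacks}(iv), this accounts for the full set of Keel generators on $\barm_{0, P \cup \{p\}}$.

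By the blow-up formula, $A^*(X_i)$ is generated as a ring by $A^*(X_{i-1})$ and the new exceptional class $E_i$, subject to a quadratic Grothendieck relation involving the Chern classes of the normal bundle of $Z_i$ and a relation forcing $E_i$ to annihilate those classes pulled back from $X_{i-1}$ whose restriction to $Z_i$ vanishes. Both inputs are controlled by the inductive hypothesis applied to $X_{i-1}$ and to the factors of $Z_i$. A direct bookkeeping check translates the blow-up relations into Keel's three families: relation (1) $\delta_S = \delta_{S^c}$ records the symmetric labelling of a two-vertex dual graph; relation (2) is the pullback, along the birational map factoring $\pi_p$ through a $\PP^1$-bundle $\barm_{0,P} \times \barm_{0,\{i,j,k,p\}}$, of the single linear equivalence on $\PP^1 \cong \barm_{0,\{i,j,k,p\}}$; and relation (3) expresses that non-nested blow-up centers are disjoint, so the proper transforms of their exceptional divisors do not intersect. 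Since the blow-up formula in integer cohomology reads identically, and the cycle map is an isomorphism at the base, it is an isomorphism throughout.

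The main obstacle is combinatorial-geometric rather than algebraic: one must choose an order in which to blow up the centers $Z_i$ so that at every stage (a) the locus to be blown up is smooth and of pure codimension two, which requires resolving deeper strata first and working with proper transforms rather than naive loci; (b) each $Z_i$ is a disjoint union of products of smaller $\barm_{0,Q}$'s, so that the inductive hypothesis can be applied to describe $A^*(Z_i)$ together with the restriction maps from $A^*(X_{i-1})$; and (c) the new exceptional divisor is correctly identified with the intended boundary component $\delta_S$ and not with some linear combination of boundary classes. Once this hierarchy of blow-ups is set up (essentially Kapranov's description of $\barm_{0,n}$ as an iterated blow-up of $\PP^{n-3}$), the deduction of Keel's three relation families from the blow-up relations is essentially formal.
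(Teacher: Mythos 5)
Your proposal is essentially Keel's original proof, and as such it is a viable route to the full statement; but note that the paper deliberately does \emph{not} prove this theorem. It only verifies the easy direction --- that relations 1), 2), 3) hold, using the forgetful map $\pi_{i,j,k,l}\colon\barm_{0,P}\to\barm_{0,\{i,j,k,l\}}\cong\PP^1$ for 2) and the disjointness of boundary divisors (or part xii') of Lemma \ref{pullbacks}) for 3) --- and then gives a complete, self-contained argument only in degree one: $\operatorname{Pic}(M_{0,P})$ is trivial because $M_{0,P}$ is a complement of hyperplanes in $\PP^{n-3}$, hence $A^1(\barm_{0,P})=H^2(\barm_{0,P};\ZZ)$ is generated by boundary classes, and the completeness of the relations in degree one is established in Proposition \ref{basis_h_2} by induction on $|P|$, pulling back a putative relation along the gluing maps $\vartheta$ of (\ref{deftheta}) and using the expression (\ref{rel0}) for $\psi_x$. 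Your blow-up induction is more ambitious: it would give the whole ring at once, together with $A^*=H^*$ with integer coefficients, which the paper's divisor-level argument cannot reach. The price is the substantial geometric input you yourself flag as the ``main obstacle'': the factorization into blow-ups with smooth codimension-two centers that are products of smaller $\barm_{0,Q}$'s, which is the real content of Keel's paper and is not something you can wave through.

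One concrete slip in your setup: the forgetful map $\pi_p\colon\barm_{0,P\cup\{p\}}\to\barm_{0,P}$ has one-dimensional fibers, so it cannot be a tower of blow-ups terminating at $X_0=\barm_{0,P}$; the dimensions do not match. The tower must terminate at $X_0=\barm_{0,P}\times\barm_{0,\{q,r,s,p\}}\cong\barm_{0,P}\times\PP^1$ for a choice of three labels $q,r,s\in P$, the map being the product of the two forgetful maps. You implicitly use the correct target later, when you derive relation 2) by pulling back the linear equivalence on the $\PP^1$ factor, but the inductive bookkeeping (which generators are exceptional classes, which are proper transforms of $\delta_A\times\PP^1$, and how the Grothendieck relations on the normal bundles of the centers translate into relation 3)) must be carried out over this product, and the inductive hypothesis has to be supplemented by the K\"unneth-type description of $A^*(\barm_{0,P}\times\PP^1)$. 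With that correction your outline matches Keel's argument and is sound.
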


\noindent
We will not give a proof of this theorem. Instead, after a few general comments, we will give the complete computation of the first Chow group $A^1(\barm_{0,P})$,
showing that it coincides with $H^2(\barm_{0,P}; \ZZ)$.

\medskip
The relations 1), 2) and 3) can be easily proved. Relations 1) are obvious.
Relations 3) follow immediately from the fact that $D_S$ and $D_T$ do not physically meet except in the cases mentioned; alternatively, one can use part xii') of Lemma \ref{pullbacks}. To get the relations in 2) look at the morphism
\begin{equation*}
\pi_{i,j,k,l}\colon \barm_{0,P}\to \barm_{0,\{i,j,k,l\}}
\end{equation*}
defined by forgetting all the points in $P$ with the exception of $i$, $j$, $k$, $l$ and stabilizing the resulting curve.
Look at the divisor classes $\delta_{\{i,j\}}$ and $\delta_{\{i,k\}}$ on $\barm_{0,\{i,j,k,l\}}$.
The pull-back of $\delta_{\{i,j\}}$ and $\delta_{\{i,k\}}$, via $\pi_{i,j,k,l}$ are given, respectively, by
\begin{equation*}
\sum_{i,j\in S;\,k,l\notin S}\delta_S\,,\qquad\sum_{i,k\in S;\,j,l\notin S}\delta_S\,.
\end{equation*}
The fact that $\delta_{\{i,j\}}=\delta_{\{i,k\}}\in A^1(\barm_{0,\{i,j,k,l\}})=A^1(\PP^1)$ gives, by pull-back, the first
relation in 2). The second is obtained in a similar way.

We now concentrate our attention on the first Chow group $A^1(\barm_{0,P})$. Set $n=|P|$. Recall first that $M_{0,P}$ parametrizes ordered $n$-tuples of distinct points of $\PP^1$, modulo automorphisms. Fixing the first three points to be $0,1,\infty$ kills all automorphisms; hence $M_{0,P}$ can be identified with the space of ordered $(n-3)$-tuples of distinct points of $\PP^1\smallsetminus \{0,1,\infty\}$, that is, with the complement of the big diagonal in $(\CC\smallsetminus \{0,1\})^{n-3}$. In particular, $M_{0,P}$ is the complement of a union of hyperplanes in $\PP^{n-3}$. It follows that the Picard group of $M_{0,P}$ is trivial, and that $\barm_{0,P}$ is birationally equivalent to $\PP^{n-3}$. As a consequence, $h^{0,1}(\barm_{0,P})=h^{0,2}(\barm_{0,P})=0$, and hence
\begin{equation*}
A^1(\barm_{0,P})=\operatorname{Pic}(\barm_{0,P})=H^2(\barm_{0,P};\ZZ)\,.
\end{equation*}
We now show that $\operatorname{Pic}(\barm_{0,P})$ is generated by boundary divisors.
Let $L$ be a line bundle on $\barm_{0,P}$. Since the Picard group of $M_{0,P}$ is zero, the restriction of $L$ to $M_{0,P}$ is trivial, i.e., there is a meromorphic
section $s$ of $L$ which is regular and nowhere vanishing when restricted to $M_{0,P}$. Therefore the divisor of $s$ is of the form
$(s)=\sum n_iD_i$, where the $D_i$ are boundary divisors, so that $L=\mathcal O(-\sum n_iD_i)$. We conclude that
$H^2(\barm_{0,P};\ZZ)=A^1(\barm_{0,P})$ is generated
by the classes $\delta_S$, with $S\subset P$ and $|S| \geq 2$, $|S^c| \geq 2$, with the following relations
\begin{itemize}
\item[1)] $\delta_S=\delta_{S^c}$;
\item[2)] for any quadruple of distinct elements $i,j,k,l\in P$,
\begin{equation*}
\sum_{i,j\in S;\,k,l\notin S}\delta_S=\sum_{i,k\in S;\,j,l\notin S}\delta_S=\sum_{i,l\in S;\,j,k\notin S}\delta_S\,.
\end{equation*}
\end{itemize}
As a consequence, Keel's theorem for $H^2(\barm_{0,P};\ZZ)=A^1(\barm_{0,P})$ is implied by the following result.

\begin{proposition}{\label{basis_h_2}} Let $P$ be a finite set with $n=|P|\geq 4$ elements. Fix distinct elements $i,j,k\in P$.
Then $H^2(\barm_{0,P};\ZZ)$ is freely generated by the classes
\begin{equation}
\delta_S\,,\quad \text{with}\ \ i\in S\ \ \text{and}\ \ 2\le |S|\leq n-3\ \ \text{or}\ \ S=\{j,k\}\,.\label{basis_2}
\end{equation}
In particular
\begin{equation*}
\operatorname{rank} \left(H^2(\barm_{0,P};\ZZ)\right)=\operatorname{rank} \left(A^1(\barm_{0,P})\right)=2^{n-1}- {{n}\choose{2}}-1\,.
\end{equation*}
\end{proposition}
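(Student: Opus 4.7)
The preceding discussion reduces the computation of $A^1(\barm_{0,P})=H^2(\barm_{0,P};\ZZ)$ to showing that the proposed classes form a $\ZZ$-basis of the module generated by the $\delta_S$ (with $|S|,|S^c|\ge2$) modulo relations 1) and 2). My plan is to verify spanning and then linear independence, with the rank formula appearing as a combinatorial consequence.

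For spanning, I use relation 1) to represent every generator as $\delta_S$ with $i\in S$. The only such generators outside the basis list are those with $|S|=n-2$, equivalently $\delta_{\{a,b\}}$ (via $\delta_S=\delta_{S^c}$) for $\{a,b\}\subset P\smallsetminus\{i\}$ with $\{a,b\}\ne\{j,k\}$. When $|\{a,b\}\cap\{j,k\}|=1$, say $a=j$, I apply relation 2) to the quadruple $\{i,j,k,b\}$: the sums indexed by the partitions $\{i,k\}|\{j,b\}$ and $\{i,b\}|\{j,k\}$ must agree, the former contains $\delta_{\{j,b\}}$ (from $S=P\smallsetminus\{j,b\}$) as its only non-basis term, and the $|S|=n-2$ contribution to the latter is exactly $\delta_{\{j,k\}}$, a basis element; this lets me solve for $\delta_{\{j,b\}}$. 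When $\{a,b\}\cap\{j,k\}=\emptyset$, applying 2) to the quadruple $\{i,j,a,b\}$ similarly expresses $\delta_{\{a,b\}}$ as a basis combination together with $\delta_{\{j,a\}}$ or $\delta_{\{j,b\}}$, each already handled in the previous case.

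Linear independence is the main obstacle. I plan to construct, for each proposed basis class $\delta_{S_0}$, a $\ZZ$-linear combination $C_{S_0}$ of one-dimensional boundary strata pairing to $1$ with $\delta_{S_0}$ and to $0$ with every other basis class. The natural test cycles are F-curves: for a four-partition $P=P_1\sqcup\cdots\sqcup P_4$ into nonempty subsets, the F-curve $F(P_1,\ldots,P_4)$ is a one-dimensional boundary stratum isomorphic to $\barm_{0,4}=\PP^1$, with intersection $\delta_S\cdot F=1$ when $S=P_a\cup P_b$ for some two-element subset $\{a,b\}\subset\{1,2,3,4\}$, $\delta_S\cdot F=0$ when $S$ is neither a union of $P_a$'s nor equal to a single $P_a$, and an explicit $\psi$-class correction when $F\subset D_S$ (the self-intersection case, $S=P_a$ with $|P_a|\ge 2$). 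The class $\delta_{\{j,k\}}$ is detected by the F-curve attached to $(\{j\},\{k\},\{i\},P\smallsetminus\{i,j,k\})$; for $\delta_{S_0}$ with $i\in S_0$ and $2\le|S_0|\le n-3$ one starts with the F-curve attached to $(S_0\smallsetminus\{i\},\{i\},\{y\},S_0^c\smallsetminus\{y\})$ for some $y\in S_0^c$, and subtracts multiples of suitable further F-curves to kill the spurious basis contributions arising from the other two $2+2$ coarsenings of the partition. Verifying combinatorially that such cancellations can always be arranged is the technical heart of the argument. A possibly cleaner alternative is induction on $n$: for $n\ge5$, choose $l\in P\smallsetminus\{i,j,k\}$; the forgetful morphism $\pi_l\colon\barm_{0,P}\to\barm_{0,P\smallsetminus\{l\}}$ admits the section $\sigma_i$, so $\pi_l^*$ is injective, and combining the pullback formula $\pi_l^*\delta_S=\delta_S+\delta_{S\cup\{l\}}$ (the genus-zero case of part iv of Lemma \ref{pullbacks}) with the inductive hypothesis reduces the problem to identifying the $2^{n-2}-n+1$ new basis classes, namely the $\delta_S$ with $l\in S$, and checking their independence modulo the pullback image.

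The rank equality finally follows from the elementary identities $\sum_{t=1}^{n-4}\binom{n-1}{t}=2^{n-1}-\binom{n-1}{2}-n-1$ and $\binom{n}{2}-\binom{n-1}{2}=n-1$: adding $1$ for $\delta_{\{j,k\}}$ produces exactly $2^{n-1}-\binom{n}{2}-1$, as announced.
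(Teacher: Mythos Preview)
Your spanning argument matches the paper's: both show that every $\delta_{\{s,t\}}$ with $s,t\ne i$ and $\{s,t\}\ne\{j,k\}$ lies in the span of the proposed basis by two applications of relation 2), and the organisation is essentially identical.

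For linear independence the paper takes a route different from both of yours. It argues by induction on $n$, but via the map $\vartheta$ of (\ref{deftheta}) rather than via $\pi_l$. Given a hypothetical relation among the basis classes, for each $a\in P\smallsetminus\{i\}$ one pulls back via $\vartheta^*$ (with $A=P\smallsetminus\{i,a\}$) to $\barm_{0,(P\smallsetminus\{i,a\})\cup\{x\}}$, which has one fewer marked point; formulas xii) and xii') of Lemma~\ref{pullbacks} together with the expression (\ref{rel0}) for $\psi_x$ turn the restricted relation into one among the basis $\mathcal F^{(P\smallsetminus\{i,a\})\cup\{x\}}_{x,j,k}$, and the inductive hypothesis forces most coefficients to vanish or to be equal. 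A couple of further restrictions kill the remaining ones. The point is that $\vartheta^*$ goes in the useful direction: it carries a relation on $\barm_{0,P}$ to a relation on a \emph{smaller} moduli space.

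Your $\pi_l$-induction, as sketched, runs the wrong way. Injectivity of $\pi_l^*$ gives a lower bound on $\operatorname{rank} H^2(\barm_{0,P})$, but it does not let you reduce a relation among the basis for $P$ to one among the basis for $P\smallsetminus\{l\}$. What you would need is a map $H^2(\barm_{0,P})\to H^2(\barm_{0,P'})$ with $|P'|<|P|$; the natural candidates are restrictions to boundary divisors, i.e.\ pullback along the sections $\sigma_m$ of $\pi_l$, and these are exactly the maps $\vartheta^*$ the paper uses. Note also that your identification of the ``new'' classes as those $\delta_S$ with $l\in S$ does not match the count $2^{n-2}-n+1$: for $n=5$ there is a single such basis class, while the rank jump is $4$.

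Your F-curve approach is a legitimate and well-known alternative and can be made to work, but as you yourself say, the cancellation step is the whole content and you have not carried it out. So as written the proposal is a complete argument for spanning and an incomplete plan for independence; the paper's $\vartheta$-induction supplies a short self-contained proof of the latter.
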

\begin{proof}Let $F$ be the subgroup of $H^2(\barm_{0,P};\ZZ)$ generated by the elements in (\ref{basis_2}). The only boundary divisors not appearing in (\ref{basis_2}) are the divisors $\delta_{\{s,t\}}$, where
$s\neq i$, $t\neq i$ and $\{s,t\}\neq \{j,k\}$. We write relation 2) for the quadruple $i,k,s,t$
\begin{equation*}
\delta_{\{i,k\}}+\delta_{\{s,t\}}+\sum_{i,k\in S;\,s,t\notin S,\,\,3\leq|S|\leq n-3}\delta_S=\delta_{\{i,t\}}+\delta_{\{s,k\}}+\sum_{i,t\in S;\,k,s\notin S,\,\,3\leq|S|\leq n-3}\delta_S\,.
\end{equation*}
Thus $\delta_{s,t}\equiv \delta_{s,k}$ modulo $F$.
Now starting from the quadruple $i,j,k,s$ we get $\delta_{s,k}\equiv 0$ modulo $F$.

Denote by $\mathcal F^P_{i,j,k}$ the set of all elements of the form (\ref{basis_2}).
We must prove that the elements of $\mathcal F^P_{i,j,k}$ are linearly independent. We proceed by induction on $n=|P|$.
The case $n=4$ is trivial. We assume $n\geq 5$.
Suppose there is a relation
\begin{equation}
\nu\delta_{j,k}+\sum_{i\in S,\,2\le|S|\leq n-3} \nu_S\delta_S=0\label{rel}
\end{equation}
Fix $a\in P\smallsetminus \{i\}$. We are going to pull back this relation to $\barm_{0, P\cup\{x\}\smallsetminus\{i,a\}}$
via the map $\vartheta$ defined in (\ref{deftheta}), where now $A=P\smallsetminus\{i,a\}$ and $g=0$. Let us first assume that $a\neq j$ and $a\neq k$.
Recalling parts xii) and xii') of Lemma \ref{pullbacks}, the pull-back of the LHS of (\ref{rel}) via $\vartheta$ is given by
\begin{equation}
\sum_{S\supset\{i,a\},\,|S|\leq n-3}\nu_S\delta_{S\cup\{x\}\smallsetminus\{i,a\}}-\nu_{\{i,a\}}\psi_x+\nu\delta_{\{j,k\}}\label{rel_pull}
\end{equation}
We now use the expression for $\psi_x$ given by (\ref{rel0}), where we consider $\psi_x$ as an element of
$H^2(\barm_{0, P\cup\{x\}\smallsetminus\{i,a\}})$. We get
\begin{equation*}
\psi_x=\delta_{\{j,k\}} +\sum_{x\in T\subset P\cup\{x\}\smallsetminus\{i,a\}\atop{j,k\in T^c},\,{2\le |T|\leq n-4}}\delta_T\,.\label{psi_expr}
\end{equation*}
The relation (\ref{rel_pull}) becomes
\begin{equation*}
\sum_{S\supset\{i,a\},\,|S|\leq n-3\atop j\in S\,\,\text{or}\,\,k\in S}
\nu_S\delta_{S\cup\{x\}\smallsetminus\{i,a\}}+\sum_{S\supset\{i,a\},\,|S|\leq n-3\atop j,k\in S^c}
(\nu_S-\nu_{\{i,a\}})\delta_{S\cup\{x\}\smallsetminus\{i,a\}}+(\nu-\nu_{\{i,a\}})\delta_{\{j,k\}}
\end{equation*}
We may now apply the induction hypothesis to $\mathcal F^{P\cup\{x\}\smallsetminus\{i,a\}}_{x,j,k}$.
Since $a$ is arbitrary, as long as $a\neq j$ and $a\neq k$ we deduce that
\begin{equation*}
\aligned
\nu_{\{i,a\}}=\,& \nu\,,\quad\text{for}\quad a\neq j,\, a\neq k\,;\\
\nu_S=\,& 0\,,\quad\text{for}\quad j\in S\,\,\text{or}\,\,k\in S,\,i\in S\,,
S\neq \{i,j\}\,, S\neq \{i,k\}\quad\text{and}\quad |S|\leq n-3\,; \\
\nu_S=\,& \nu\,, \quad\text{for}\quad j,k\in S^c ,\,i\in S\,, \quad\text{and}\quad |S|\leq n-3\,.
\endaligned
\end{equation*}
Using again the general expression for $\psi_i$ in terms of the boundary divisors given by (\ref{rel0}), the original relation (\ref{rel}) can be written as
\begin{equation*}
\nu_{\{i,j\}}\delta_{\{i,j\}}+\nu_{\{i,k\}}\delta_{\{i,k\}}+\nu\Big(\sum_{2\le |S|\leq n-3\atop i\in S;\, j,k\in S^c}
\delta_S \Big) +\nu\delta_{\{j,k\}}=
\nu_{\{i,j\}}\delta_{\{i,j\}}+\nu_{\{i,k\}}\delta_{\{i,k\}}+\nu\psi_i=0
\end{equation*}
We pull back this relation to $\barm_{P\cup\{x\}\smallsetminus\{j,k\}}$ and we get $\nu=0$.
Let $l\neq k$. We then pull back the resulting relation to $\barm_{\{i,j,l,x\}}$ and we get $\nu_{\{i,j\}}=0$.
But then $\nu_{\{i,k\}}=0$ as well.
\end{proof}

\medskip
\section{The vanishing of the high degree homology of $M_{g,n}$}\label{high}

We now begin our computation of the first and second cohomology groups of the moduli spaces of curves. It is useful to observe that the rational cohomology of the orbifold $\sbarm_{g,n}$ coincides with the one of the space $\barm_{g,n}$ (cf. \cite{alr}, Proposition 2.12), and likewise the rational cohomology of $\sm_{g,n}$ is the same as the one of $M_{g,n}$, so that on many occasions we will be able to switch from the orbifold point of view to the space one, and conversely, when needed.

One of the basic results about the homology of $M_{g,n}$
is that it vanishes in high degree. Set
\begin{equation}
c(g,n)=\left\{\aligned n-3\quad &\text{if}\quad g=0\,;\\
4g-5\quad&\text{if}\quad g>0,\,\,\, n=0\,;\\
4g-4+n\quad &\text{if}\quad g>0,\,\,\, n>0\,.
\endaligned\right.
\label{def_c_cell}
\end{equation}
The vanishing theorem, due to John Harer (cf. \cite{harer_2}, Theorem 4.1), is the following:

\begin{theorem}{\label{vanish.high}} $H_k(M_{g,n};\QQ)=0$ for $k>c(g,n)$.
\end{theorem}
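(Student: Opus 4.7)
The plan is to realize $M_{g,n}$, as a smooth orbifold, as homotopy equivalent to a CW-complex of real dimension at most $c(g,n)$; the vanishing of rational homology in higher degrees then follows at once. In genus zero there is little to do: fixing three marked points at $0$, $1$, $\infty$ exhibits $M_{0,n}$ as the complement of a hyperplane arrangement in $\CC^{n-3}$, hence as a smooth affine variety of complex dimension $n-3$. The Andreotti--Frankel theorem then supplies a CW-model of real dimension at most $n-3 = c(0,n)$, and we are done.

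For $g\ge 1$ and $n\ge 1$, I would invoke the ribbon graph cell decomposition. Strebel's theorem on quadratic differentials---or equivalently Penner's decorated Teichm\"uller theory---yields a mapping-class-group equivariant homeomorphism
\[
M_{g,n}\times\mathbb{R}_{>0}^n \;\cong\; \mathcal{RG}_{g,n},
\]
where $\mathcal{RG}_{g,n}$ is the space of metric ribbon graphs of genus $g$ with $n$ labelled boundary components, naturally stratified into cells indexed by combinatorial ribbon graphs $G$, with the $G$-cell of real dimension $|E(G)|$. Top cells correspond to trivalent graphs, for which $|E|=6g-6+3n$, matching the real dimension of the left-hand side. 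The core step is then to construct an equivariant deformation retract of $\mathcal{RG}_{g,n}$ onto a subcomplex of dimension at most $c(g,n)+n=4g-4+2n$, i.e., onto the locus of ribbon graphs with at most $2g-2+n$ vertices. This combinatorial collapse, executed via a discrete Morse-theoretic argument on the arc complex dual to the ribbon graphs, is the main obstacle and the real content of Harer's original proof.

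The remaining case $g\ge 1$, $n=0$ demands a separate argument, since the Birman exact sequence $1\to\pi_1(S_g)\to\Gamma_{g,1}\to\Gamma_g\to 1$ and its Leray spectral sequence only give $\mathrm{vcd}(\Gamma_{g,1})\le\mathrm{vcd}(\Gamma_g)+2$, which is the wrong inequality to reduce the closed case to the once-punctured one. Here one constructs a spine of dimension $4g-5=c(g,0)$ in the Teichm\"uller space of $S_g$ using Harer's complex of cut systems---a cut system being a set of $g$ disjoint non-separating simple closed curves whose complement is connected---with combinatorial arguments closely analogous to those for $n\ge 1$. Alternatively, one could try to approach all three cases uniformly through Looijenga's affine cover conjecture, which would yield the bound via Mayer--Vietoris and Andreotti--Frankel, but in lieu of that conjecture the spine construction remains the essential ingredient.
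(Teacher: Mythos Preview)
Your genus-zero treatment matches the paper's exactly. For $g\ge 1$ your plan is close in spirit to the paper's but organized differently, and in one place you dismiss an approach that the paper in fact makes work.

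The paper treats only the case $n=1$ directly via the arc complex (essentially the dual of your ribbon-graph picture): the Bowditch--Epstein/Penner homeomorphism $T_{g,1}\cong |A^0|$ is combined with an explicit barycentric-subdivision retraction lemma, which retracts $|A^0|$ equivariantly onto a subcomplex of dimension $\mu-\nu$, where $\mu$ and $\nu$ are the maximal and minimal dimensions of proper simplices. The counts $\mu=6g-4$, $\nu=2g-1$ give $4g-3=c(g,1)$. No discrete Morse theory is needed; the retraction is written down by hand. All other values of $n$ are then reduced to $n=1$ by Leray spectral sequences for the forgetful maps, after passing to level covers $M_{g,n}[m]$ so as to have genuine fibrations.

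The point you should revisit is $n=0$. You are right that the Birman sequence gives the vcd inequality in the wrong direction, but the paper does \emph{not} use a vcd bound. Instead it runs the cohomology Leray spectral sequence for $\eta\colon M_{g,1}[m]\to M_g[m]$ and observes that cupping with the fibre orientation class trivializes $R^2\eta_*\QQ$ and forces $E_2^{p,2}=E_\infty^{p,2}$; hence $H^p(M_g[m];\QQ)\cong E_2^{p,2}$ survives into $H^{p+2}(M_{g,1}[m];\QQ)$. Contraposing, the vanishing for $n=1$ gives the vanishing for $n=0$ with the correct shift $c(g,0)=c(g,1)-2$. So the cut-system spine is unnecessary here: the paper's route is both shorter and avoids a second geometric construction. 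Your uniform ribbon-graph treatment of all $n\ge 1$ is a legitimate alternative to the paper's induction, but it leaves the core collapse as a black box, whereas the paper's retraction lemma makes the $n=1$ case fully self-contained.
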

\noindent
The case $g=0$ is straightforward.
As we observed in the previous section, $M_{0,n}$ is an affine variety of dimension $n-3$, so that
its homology vanishes in degrees strictly greater than $n-3$.
Actually, a similar proof would yield the general result
if one could establish the following conjecture.

\begin{conjecture}{\label{aff}}{\rm(Looijenga).} Let $g$ and $n$ be non-negative integers such that $2g-2+n>0$. Then $M_{g,n}$ is the union of $g$ affine subsets if $n\neq 0$, and
is the union of $g-1$ affine subsets if $n=0$.
\end{conjecture}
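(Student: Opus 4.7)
The plan is to produce an explicit affine open covering of $M_{g,n}$ of the required size. The basic tool is the classical fact that the complement of an effective Cartier divisor $D$ on a projective variety $\Mgnbar$ whose class is ample is an affine variety; if in addition $\operatorname{Supp}(D)\supset\partial M_{g,n}$, then this complement sits inside $M_{g,n}$ and furnishes an affine open subset of $M_{g,n}$. The conjecture is thus reduced to exhibiting $g$ (respectively $g-1$) such effective ample divisors $D_1,\dots,D_g$ whose supports meet exactly in the boundary, i.e., $\bigcap_i\operatorname{Supp}(D_i)=\partial M_{g,n}$. Writing $D_i=m_i\partial+F_i$ with $F_i$ effective and free of boundary components, the task splits into two requirements: each class $m_i\partial+F_i$ must be ample on $\Mgnbar$, and the restrictions $F_i|_{M_{g,n}}$ must have empty common intersection.

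For $g=0$ there is nothing to do, since $M_{0,n}$ is itself affine, so a single open suffices. For $g\geq 1$ the natural candidates for the divisors $F_i$ are classical geometric loci on $M_g$ and $M_{g,n}$: hyperelliptic and trigonal divisors, Weierstrass loci, and more generally the Brill--Noether divisors of curves carrying a $g^r_d$ with $\rho(g,r,d)=-1$. Many of these are known to determine effective divisor classes on $\Mgnbar$ expressible in terms of $\lambda$, $\delta_{irr}$ and the boundary classes $\delta_{a,A}$, and in many situations the corresponding class becomes ample after adding a sufficiently positive multiple of the boundary. A further ingredient would be the clutching and projection morphisms $\xi_\Gamma$ and $\pi_x$ of Section~\ref{strata}, which should allow one to pull back affine coverings from lower-genus or fewer-marked-point moduli spaces and build up the desired covering inductively in $g$ and $n$, with the explicit Brill--Noether divisors handling the base cases.

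The principal obstacle is the sharpness of the count $g$ (or $g-1$). Producing \emph{some} affine covering of $M_{g,n}$ is not hard; the content of the conjecture is that so few pieces suffice. Inductive constructions via $\xi_\Gamma$ and $\pi_x$ tend to accumulate affines and to destroy the bound, so the decisive step seems to require a genuinely new global geometric input, perhaps a well-chosen rational map from $\Mgnbar$ to a projective variety of small dimension, or a canonical system of effective divisors coming from theta constants, syzygy loci, or Prym-type constructions. For this reason the central difficulty lies not in the existence of an affine covering, but in matching the predicted bound exactly, and one expects that a proof in full generality will depend on a substantially new construction rather than on a purely inductive argument.
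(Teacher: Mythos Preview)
The statement is a \emph{conjecture} in the paper, not a theorem: the paper offers no proof whatsoever, and indeed presents it as an open problem whose affirmative resolution would yield an algebro-geometric proof of Harer's vanishing theorem. So there is no ``paper's own proof'' to compare against.

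Your proposal is not a proof either, and you essentially say so yourself. You outline a plausible strategy (find effective ample divisors $D_i$ on $\Mgnbar$ supported on the boundary together with some extra loci, so that the complements are affine and cover $M_{g,n}$), but you then correctly identify the genuine obstruction: matching the sharp bound $g$ (or $g-1$). Your candidate divisors (Brill--Noether loci, Weierstrass divisors, etc.) are reasonable objects to contemplate, but you give no argument that any specific collection of them has empty common intersection inside $M_{g,n}$, nor that the required ampleness holds after adding boundary, nor that the count comes out right. The final paragraph concedes that ``a proof in full generality will depend on a substantially new construction,'' which is an admission that the proposal does not constitute a proof. This is an honest assessment of the state of affairs, but it should be labelled as a discussion of the conjecture rather than as a proof attempt.
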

\noindent
We are now going to recall Harer's proof of Theorem \ref{vanish.high}.
From now on we assume $g>1$.
We only treat the case $n=1$. Then we will show how to reduce the other cases to this one.

We fix a compact oriented surface $S$ and a point $p\in S$
and we denote by $\mathcal
A$ the set of isotopy classes, relative to $p$, of loops in $S$ based at $p$. We also require that
no class in $\mathcal A$ represents a homotopically trivial loop in $S$.
The \it arc complex $A=A(S;p)$ \rm is the simplicial complex whose $k$-simplices are given by
$(k+1)$-tuples $a=([\alpha_0],\dots,[\alpha_k])$ of distinct classes in $\mathcal A$ which are representable by a $(k+1)$-tuple
$(\alpha_0,\dots,\alpha_k)$ of loops intersecting only in $p$. The geometric realization of $A$ is denoted by
$|A|$.
A simplex $([\alpha_0],\dots,[\alpha_k])\in A$ is said to be
\it proper \rm if $S\smallsetminus \cup_{i=0}^k\alpha_i$ is a disjoint union of discs.
The improper simplices form a subcomplex of $A$ denoted by $A_\infty$. We set
\begin{equation*}
A^0=A\smallsetminus A_\infty\quad\text{and}\quad|A^0|=|A|\smallsetminus|A_\infty|\,.
\end{equation*}
The mapping class group $\Gamma_{g,1}$ acts on $A$ in the obvious way. For
$a=([\alpha_0],\dots,[\alpha_k])\in A$ and $ [\gamma]\in\Gamma_{g,1}$ we define
\begin{equation}
[\gamma]\cdot a=([\gamma\alpha_0],\dots,[\gamma\alpha_k])\,.\label{cell.action}
\end{equation}
Let us denote by $T_{g,1}$ the Teichm\"uller space of 1-pointed genus $g$ Riemann surfaces. The fundamental result is the following (\cite{be}, Theorem 9.5; see also Chapter 2 of \cite{harer_3} and \cite{ep}).
\begin{theorem}{\label{isopsi}} There is a
$\Gamma_{g,1}$-equivariant homeomorphism
\begin{equation}
\Psi\colon \quad {T}_{g,1}\to |A^0|\,\label{psi_cell}
\end{equation}
\end{theorem}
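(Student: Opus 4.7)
The plan is to construct $\Psi$ explicitly using hyperbolic geometry, and then prove it is a homeomorphism by exhibiting a continuous inverse. Given a point $[X,p]\in T_{g,1}$, equip $X\smallsetminus\{p\}$ with its unique complete, finite-area hyperbolic metric, which turns $p$ into a cusp. Fix a small embedded horoball $H$ about the cusp and form the cut locus $\Sigma(X,H)\subset X\smallsetminus(H\cup\{p\})$, i.e., the set of points admitting more than one minimizing geodesic to $\partial H$. One checks that $\Sigma(X,H)$ is a finite embedded graph whose complement in $X\smallsetminus\{p\}$ is a disjoint union of open cells $C_0,\dots,C_k$, each of which deformation retracts onto a unique geodesic arc $\alpha_i$ from $p$ to itself. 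The classes $([\alpha_0],\dots,[\alpha_k])$ then form a proper simplex $a\in A^0$, and the barycentric coordinates of $\Psi([X,p])\in|a|$ are defined by a normalization of the widths of the $C_i$ measured along $\partial H$.

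Next I would verify that this construction is independent of $H$ (replacing $H$ by a concentric horoball rescales all widths by a common factor, hence leaves the barycentric coordinates unchanged), that $\Psi$ is $\Gamma_{g,1}$-equivariant since the cut locus is an intrinsic hyperbolic invariant of the marked surface, and that $\Psi$ is continuous. Continuity reduces to continuity of the cut locus in the hyperbolic metric; the only delicate point is when the combinatorial type of $\Sigma$ jumps, but at such a transition an edge of $\Sigma$ shrinks to a vertex and one cell splits into two, which corresponds exactly to crossing a common face between two adjacent top-dimensional simplices of $|A^0|$, with the new arc appearing with weight zero.

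The main obstacle is to produce a continuous inverse. Given a proper weighted simplex $(a,(t_0,\dots,t_k))\in|A^0|$, I would cut $S$ along representatives of the $[\alpha_i]$; because $a$ is proper, the complementary pieces are topological discs $D_1,\dots,D_r$. Each $D_j$ must be realized uniquely as an ideal hyperbolic polygon whose horocyclic boundary segments have lengths prescribed by the $t_i$, subject to gluing/compatibility relations around each arc; verifying that every proper weight system admits such a realization, that the resulting hyperbolic surface is complete with a cusp at $p$, and that the construction depends continuously on $(t_0,\dots,t_k)$ is the technical heart of the theorem. This is precisely the content of the Bowditch-Epstein cell decomposition theorem \cite{be}, and can alternatively be extracted from Penner-Epstein decorated Teichm\"uller theory \cite{ep} via the duality between the cut locus of a horoball and the convex hull of its lifts in Minkowski space. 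Once the inverse is in hand, the bijectivity and bicontinuity of $\Psi$, combined with the numerical coincidence $\dim T_{g,1}=6g-4=\dim|A^0|$, yield the desired $\Gamma_{g,1}$-equivariant homeomorphism.
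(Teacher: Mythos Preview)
Your approach matches the paper's: both define $\Psi$ via the hyperbolic cut locus (the paper calls it the ``hyperbolic spine'') relative to a small horocycle about the cusp, extract an arc system from it, assign barycentric weights from horocyclic lengths, and defer the analytic details to Bowditch--Epstein \cite{be} and Epstein--Penner \cite{ep}. The paper in fact only sketches the construction of $\Psi$ and does not address the inverse at all, so in outlining continuity across face-crossings and the inverse via ideal polygons you have gone further than the paper does.

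There is, however, a slip in your description of the forward map. The complement of the cut locus $\Sigma(X,H)$ in $X\smallsetminus\{p\}$ is not a disjoint union of cells $C_0,\dots,C_k$ each retracting onto an arc from $p$ to itself; it is a \emph{single} open disc, since it contains the horoball and every point off $\Sigma$ flows to $\partial H$ along its unique shortest geodesic (the paper states explicitly that $C\smallsetminus G$ is a disc). The arcs $\alpha_i$ are instead the Poincar\'e duals of the \emph{edges} of the graph $\Sigma$: one loop based at $p$ per edge, crossing that edge once and no other. Correspondingly, the weight on $\alpha_i$ is not the ``width of a cell $C_i$ along $\partial H$'' but the length of the horocyclic arc cut out by the geodesics from the endpoints of the dual edge of $\Sigma$ to the cusp. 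With this correction your inverse construction lines up correctly: cutting along the $\alpha_i$ yields one ideal polygon per \emph{vertex} of $\Sigma$, not per edge.
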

\noindent We give a very brief sketch of the proof of this theorem. Actually, we will limit ourselves to giving an idea of how
the map (\ref{psi_cell}) is defined. Let $(C, p)$ be a 1-pointed smooth curve of genus $g$.
The uniformization theorem
for Riemann surfaces provides the surface $C\smallsetminus \{p\}$ with a canonical hyperbolic metric, the Poincar\'e metric.
In this metric the point $p$ appears as a \it cusp. \rm This cusp has infinite distance from the points
in $C\smallsetminus\{p\}$.
The set of horocycles around $p$ is a canonical family of simple closed curves in $C\smallsetminus \{p\}$,
contracting to $p$. Choose a small constant
$c$ and a horocycle $\eta$ of length $c$. Given a general point $q\in C\smallsetminus \{p\}$, there is a \it unique \rm shortest geodesic joining $q$ to $\eta$. In fact, the locus of points $q$ in $C$ having the property that there are two or more shortest geodesics from $q$ to $\eta$ is a connected graph $G\subset C$ which is called \it the hyperbolic spine of $(C,p)$. \rm The fundamental property of this graph is that $C\smallsetminus G$ is a disc $\Delta$.
Suppose that $G$ has $k+1$ edges. Then the Poincar\'e dual of
$G$ is a graph consisting of $k+1$ loops $\alpha_0,\dots,\alpha_{k+1}$ based at $p$, which in fact
give rise to a simplex $\alpha=(\alpha_0,\dots,\alpha_{k+1})\in A^0$.
To get a point in $|\alpha|$ we need a $(k+1)$-tuple of numbers $a_1,\dots, a_{k+1}$ with $\sum a_i=1$. Now each loop $\alpha_i$ corresponds, by duality, to an edge $e_i$ of $G$.
Draw the geodesics from the vertices of $e_i$ to $p$. These geodesics
cut out on $\eta$ two arcs. Using elementary hyperbolic geometry one can see that these two arcs have the same length
$a_i$.
This is the coefficient one assigns to the loop $\alpha_i$. We finally choose the constant $c$ in such a way as to get $\sum a_i=1$.

Let us show how Theorem \ref{vanish.high}, for the case $n=1$, follows from Theorem
\ref{isopsi}. We need a general lemma.

\begin{lemma}{\label{simpl.cmplx}}Let $K$ be a simplicial complex.
Let $H\subset K$ be a subcomplex. Set $L=K\smallsetminus H$. Let $K^1$ and $H^1$ be the first barycentric subdivisions of $K$ and $H$, respectively. Let $M$ be the subcomplex of $K^1$ whose vertices are barycenters of simplices
of $L$. In particular, $M$ has dimension $\mu-\nu$, where $\mu$ (resp. $\nu$) is the maximal (resp. minimal) dimension of a simplex
belonging to $L$ and $|M|\subset |L|=|K|\smallsetminus|H|$. Then there is a deformation retraction of $|L|$ onto $|M|$.
Moreover if $K$ is acted on by a group $G$ and if the subcomplex $H$ is preserved by this action,
then the above deformation can be assumed to be $G$-equivariant.
\end{lemma}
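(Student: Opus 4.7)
The plan is to exhibit an explicit radial projection of $|L|$ away from $|H|$, defined simplex by simplex via the canonical join decomposition in the barycentric subdivision $K^1$.

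First I would unpack the structure of $K^1$. A $k$-simplex $\tau$ of $K^1$ is a strictly ascending chain $\sigma_0 < \sigma_1 < \cdots < \sigma_k$ of simplices of $K$, with vertices $b_{\sigma_i}$. Because $H$ is a subcomplex, the indices $i$ with $\sigma_i \in H$ form an initial segment of $\{0,\dots,k\}$, while those with $\sigma_i \in L$ form the complementary final segment. Hence every simplex of $K^1$ decomposes canonically as a join $\tau = \tau_H \ast \tau_L$, with $\tau_H$ a (possibly empty) simplex of $H^1$ and $\tau_L$ a (possibly empty) simplex of $M$. A standard verification shows that a point $x$ lying in the interior of $\tau$ belongs to $|H|$ precisely when $\tau_L = \emptyset$, so $x \in |L|$ if and only if $\tau_L$ is nonempty.

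Next I would define the retraction globally. For $x \in |L|$ in the interior of $\tau = \tau_H \ast \tau_L$, let $s(x) \in [0,1)$ be the sum of the barycentric coordinates of $x$ on the vertices of $\tau_H$, and write uniquely
\begin{equation*}
x = s(x)\, y(x) + (1-s(x))\, z(x), \qquad y(x) \in |\tau_H|,\quad z(x) \in |\tau_L|.
\end{equation*}
Set
\begin{equation*}
r(x) = z(x), \qquad h_t(x) = (1-t)\, x + t\, z(x).
\end{equation*}
I would then verify, face-by-face in $K^1$, that these formulas glue to continuous maps on $|L|$; that $h_0 = \operatorname{id}$, $h_1 = r$, and $h_t$ fixes $|M|$ pointwise (since $s(x) = 0$ there); and that $h_t(|L|) \subset |L|$ for every $t$, because the $|M|$-weight $1 - (1-t)s(x)$ is strictly positive whenever $s(x) < 1$. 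The dimension bound $\dim M \le \mu - \nu$ is immediate, since a chain in $L$ has strictly increasing dimensions lying in $\{\nu,\nu+1,\ldots,\mu\}$ and hence has at most $\mu - \nu + 1$ terms.

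For equivariance, every ingredient in the construction---the sets $L$ and $M$, the vertex barycenters, and the join decomposition $\tau = \tau_H \ast \tau_L$---is manifestly preserved by any automorphism of $K$ that preserves $H$. Hence $r$ and $h_t$ commute with any such $G$-action automatically. The main (and only real) obstacle is isolating the canonical join decomposition and checking that the formulas glue continuously across faces of $K^1$; once this combinatorial bookkeeping is set up, everything else falls out with no effort.
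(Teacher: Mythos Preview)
Your proposal is correct and is essentially the same argument as the paper's: the paper writes a point $p=\sum\lambda_i b_i$ in a simplex of $K^1$, lets $s$ be the least index with $a_s\in L$, and retracts along the straight line to $p'=(\sum_{i\ge s}\lambda_i)^{-1}\sum_{i\ge s}\lambda_i b_i$, which is exactly your $z(x)$ coming from the join decomposition $\tau=\tau_H\ast\tau_L$. Your framing via the join is a bit more explicit about why the formulas glue and why the dimension bound holds, but the underlying retraction is identical.
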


\begin{proof}By definition, the vertices of a $k$-simplex of $K^1$ are barycenters $b_0,\dots, b_k$ of a
strictly increasing sequence $a_0<a_1<\dots<a_k$ of simplices of $K$. Such a simplex
is in $M$ if and only if $a_0$ is a simplex of $L$. Let $p\in |L|=|K^1|\smallsetminus|H^1|$.
Write $p=\sum_{i=0}^k\lambda_ib_i$ and let $s$ be the minimum index such that $a_s$
belongs to $L$, so that $s<k$. Setting
\begin{equation*}
p'= \left(\sum_{i=s}^k\lambda_i\right)^{-1}\cdot\sum_{i=s}^k\lambda_i b_i\,, \quad\text{and}\quad F(p,t)=(1-t)p+tp'\,,
\end{equation*}
gives the desired retraction. In the presence of a $G$-action this retraction is clearly $G$-equivariant.
\end{proof}
We now go back to the arc complex $A$.
To apply this lemma to our situation let us estimate the maximum dimension $\mu$ and minimum dimension $\nu$
of simplices belonging to $A^0$. Let then $\alpha=(\alpha_0,\dots,\alpha_k)$ be a simplex in $A^0$ and let $N$ be the number of connected components of $S\smallsetminus\cup_{i=0}^k\alpha_i$.
These components are discs, so that $2-2g=N-k$. In particular $\nu\geq 2g-1$. Suppose now that $\alpha$ has maximal dimension. Then each of the above connected components must be bounded by
3 among the arcs $\alpha_0,\dots,\alpha_k$. It follows that $N=\frac{2}{3}(k+1)$. Therefore
$k=6g-4$ and $\mu-\nu\leq 4g-3$. Using the preceding lemma, we can conclude that
the complex $|A|$ can be $\Gamma_{g,1}$ - equivariantly deformed
to a complex of dimension less than or equal to $4g-3$. This proves
Theorem \ref{vanish.high}, for the case $n=1$

We are now going to treat the cases where $n\neq 1$. Let $(\Sigma; x_1,\dots,x_n)$ be a reference $n$-pointed genus $g$ Riemann surface satisfying the stability condition $2-2g-n<0$. Fix an integer $m\geq 2$.
Consider the Teichm\"uller space $T_{g,n}$, the modular group $\Gamma_{g,n}$,
and its subgroup $\Lambda_m$ formed by isotopy classes of diffeomorphisms of $(\Sigma; x_1,\dots,x_n)$ inducing the identity on $ H_1(\Sigma;\ZZ/(m))$. The quotient $M_{g,n}[m]=T_{g,n}/\Lambda_m$
is, by definition, the moduli space of $n$-pointed, genus $g$ curves with $m$-level structure. Since the homeomorphism $\Psi$ in Theorem
\ref{isopsi} is $\Gamma_{g,n}$-equivariant and therefore, a fortiori,
$\Lambda_m$-equivariant, exactly the same reasoning used to prove
Theorem \ref{vanish.high} in the case $n=1$ shows that $M_{g,1}[m]$ has the homotopy type of a space of dimension at most $4g-3$. This implies that $H_k(M_{g,1}[m];\QQ)$ vanishes for $k>4g-3$ and, more generally, that
\begin{equation}
H_k(M_{g,1}[m];L)=0 \quad\text{for}\quad k>4g-3\,,\label{van_m}
\end{equation}
for any local system of abelian groups $L$. A lemma of Serre \cite{serre} asserts that, when $m\geq 3$, the only
automorphism of an $n$-pointed, genus $g$ curve $(C; p_1,\dots,p_n)$ inducing the identity on $H_1(C;\ZZ/(m))$ is the identity. As a consequence, when $m\geq 3$, $M_{g,n}[m]$ is smooth and equipped with a universal family
$\pi\colon \mathcal C\to M_{g,n}[m]$. From now on we assume that $m\geq 3$.
Since $M_{g,n}$ is the quotient of
$M_{g,n}[m]$ by the finite group $\Gamma_{g,n}[m]=\Gamma_{g,n}/\Lambda_m$, the vanishing of
$H_k(M_{g,n}[m];\QQ)$ implies the vanishing of $H_k(M_{g,n};\QQ)$.
It is then sufficient
to prove our vanishing statement for the homology of $M_{g,n}[m]$.
Look
at the universal family $\pi\colon \mathcal C\to M_{g,n}[m]$. Let $D\subset \mathcal C$ be the divisor
which is the sum of the images of the $n$ marked sections of $\pi$. By definition,
$\mathcal C\smallsetminus D$ is isomorphic to $M_{g,n+1}[m]$. It follows that the morphism
$\eta\colon M_{g,n+1}[m]\to M_{g,n}[m]$ is a topologically locally trivial fibration with fiber $F$ homeomorphic to
$\Sigma\smallsetminus\{x_1,\dots,x_n\}$. Let $L$ be a local system of abelian groups on $M_{g,n+1}[m]$. The $E^2$-term of the Leray spectral sequence with coefficients in $L$ of the fibration $\eta$ is given by
\begin{equation*}
E^2_{p,q}=H_p(M_{g,n}[m]; \mathcal{H}_q(\eta;L))\,,
\end{equation*}
where $\mathcal{H}_q(\eta;L)$ is the local system of the $q$-th homology groups of the fibers of $\eta$ with coefficients in $L$. When $n>0$, the homological dimension of the fiber $F$ is equal to 1. It follows that if
$H_k(M_{g,n}[m];L')$ vanishes for $k>k_0$ and for any local system $L'$, then $H_k(M_{g,n+1}[m];L)$ vanishes for $k>k_0+1$. In view of (\ref{van_m}), this shows, inductively on $n$, that $H_k(M_{g,n}[m];L)$ vanishes for any local system of abelian groups $L$ and any $k>4g-4+n$, whenever $n\geq 1$. In particular, this proves Theorem \ref{vanish.high} for $n\ge 1$.

To treat the case $n=0$ it is convenient, although not strictly necessary, to switch to cohomology. What we have to show, then, is that $H^p(M_{g}[m];\QQ)$ vanishes for $p>4g-5$. Look at the cohomology Leray spectral sequence for $\eta\colon M_{g,1}[m]\to M_{g}[m]$, whose $E_2$ term is
\begin{equation*}
E_2^{p,q}=H^p(M_{g}[m];R^q\eta_*\QQ)\,.
\end{equation*}
In this case, the fibers of $\eta$ are compact Riemann surfaces, and carry a canonical orientation; this gives a canonical section $\omega$, and hence a trivialization, of the rank 1 local system $R^2\eta_*\QQ$. Cupping with $\omega$, wiewed as an element of $E_2^{0,2}=H^0(M_{g}[m];R^2\eta_*\QQ)$, gives homomorphisms $E_2^{p,q}\to E_2^{p,q+2}$, compatible with differentials. In particular, $E_2^{p,0}\to E_2^{p,2}$ is an isomorphism. Thus $d_2\colon E_2^{p,2}\to E_2^{p+2,1}$ vanishes, since $d_2\colon E_2^{p,0}\to E_2^{p+2,-1}$ obviously does, and $d_2\colon E_2^{p-2,3}\to E_2^{p,2}$ also vanishes since $E_2^{p,q}=0$ for $q>2$. The analogues of these statements hold for all $E_r$ with $r\ge 2$; the upshot is that $E_2^{p,2}=E_\infty^{p,2}$. This shows that, if $H^p(M_{g}[m];\QQ)\cong E_2^{p,2}$ does not vanish, neither does $H^{p+2}(M_{g,1}[m];\QQ)$. In conjunction with (\ref{van_m}), this proves our claim.

\medskip
\section{Divisor classes in $\barm_{g,P}$}\label{divclasses}

The first and second rational cohomology groups of $\Mgnbar$ are completely described by the following result (cf. \cite{ac}, Theorem 2.2).

\begin{theorem}{\label{coh_bar}}For any $g$ and $P$ such that
$2g-2+|P|>0$, $H^1(\barm_{g,P};\QQ)=0$ and $H^2(\barm_{g,P};\QQ)$ is generated by the classes $\kappa_1,
\psi_1,\dots,\psi_n$, $\delta_{irr}$, and the classes $\delta_{a,A}$ such that $0\le a\le
g$, $2a-2+\vert A\vert\ge 0$ and $2(g-a)-2+\vert A^c\vert\ge 0$. The relations among
these classes are as follows.
\begin{itemize}
\item[a)]If $g>2$ all relations are generated by those of the form
\begin{equation}
\delta_{a,A}=\delta_{g-a,A^c}\,.\label{complrelbis}
\end{equation}
\item[b)]If $g=2$ all relations are generated by the relations {\rm (\ref{complrelbis})} plus
\begin{equation}
5\kappa_1=5\psi+\delta_{irr}-5\delta_0+7\delta_1\,.\label{kappag2}
\end{equation}
\item[c)]If $g=1$ all relations are generated by the relations {\rm (\ref{complrelbis})} plus the following
ones
\begin{eqnarray}
\kappa_1&=&\psi-\delta_0\,,\label{kappag1} \\
12\psi_p&=&\delta_{irr}+12\sum_{{S\ni p}\atop{\vert S\vert\ge
2}}\delta_{0,S}\,.\label{psig1}
\end{eqnarray}
\item[d)]If $g=0$, all relations are generated by {\rm (\ref{complrelbis})}, by the relations
\begin{equation}
\sum_{A\ni p,q\atop A\not\ni r,s}\delta_{0,A}
=\sum_{A\ni p,r\atop A\not\ni q,s}\delta_{0,A}
=\sum_{A\ni p,s\atop A\not\ni q,r}\delta_{0,A}\,,\label{Keelrel}
\end{equation}
where $\{p,q,r,s\}$ runs over all quadruples of distinct elements in $P$, and by
\begin{equation}
\aligned
\kappa_1&=\sum_{A\not\ni x,y}(\vert A\vert-1)\delta_{0,A}\,\\
\psi_z&=\sum_{A\ni z\atop A\not\ni x,y}\delta_{0,A}\,,\\
\delta_{irr}&=0\,.
\endaligned
\label{kpsidirr_rel}
\end{equation}
\end{itemize}
\noindent Furthermore, the rational Picard group $\operatorname{Pic}(\barm_{g,P})\otimes \QQ$ is always isomorphic to $H^2(\barm_{g,P};\QQ)$.
\end{theorem}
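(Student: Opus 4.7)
The plan is to proceed by induction on the dimension $d = 3g - 3 + |P|$ of $\sbarm_{g,P}$, with base case handled by Proposition \ref{basis_h_2} in genus zero (together with the observation that $\sbarm_{0,P}$ is rational, so $\operatorname{Pic}(\barm_{0,P}) \otimes \QQ \cong H^2(\barm_{0,P};\QQ)$). The key Hodge-theoretic input from \cite{deligne3} is that pullback under a proper surjective morphism between smooth proper Deligne--Mumford stacks is injective on rational cohomology; this will be applied to the forgetful map $\pi_x$, the clutching maps $\xi_{irr}$, $\xi_{a,A}$, and the induced maps on normalizations $\td_\Gamma$.

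I would first verify that the listed relations all hold. Relation (\ref{complrelbis}) is tautological. Relations (\ref{kappag1}) and (\ref{kappag2}) follow from Mumford's formula $\kappa_1 = 12\lambda - \delta + \psi$ combined with the standard computations $\lambda = 0$ on $\sbarm_{1,P}$ and $10\lambda = \delta_{irr} + 2\delta_1$ on $\sbarm_{2,P}$. The relations in (\ref{psig1}) come from a known tautological identity in genus $1$, which can be verified on $\sbarm_{1,1}$ and then transported via Lemma \ref{pullbacks}. The relations (\ref{kpsidirr_rel}) in genus $0$ use the fact that genus-zero stable curves are of compact type (giving $\delta_{irr} = 0$), formula (\ref{rel0}), and a direct computation on the universal curve.

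To show the classes generate $H^2(\sbarm_{g,P};\QQ)$, I would use the Gysin exact sequence attached to the normal crossings pair $(\sbarm_{g,P}, \partial \sm_{g,P})$:
\begin{equation*}
0 \to H^1(\sbarm_{g,P};\QQ) \to H^1(\sm_{g,P};\QQ) \to \bigoplus_\Gamma H^0(\td_\Gamma;\QQ) \xrightarrow{\gamma} H^2(\sbarm_{g,P};\QQ) \to W_2 H^2(\sm_{g,P};\QQ) \to 0,
\end{equation*}
where $\Gamma$ ranges over one-edge stable graphs, $\gamma$ sends $[\td_\Gamma] \mapsto \delta_\Gamma$, and $W_2$ denotes the pure weight-$2$ part. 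The image of $\gamma$ is the span of $\delta_{irr}, \delta_{a,A}$; Harer's description of $H^2(\sm_{g,P};\QQ)$ (which for $g \ge 3$ is obtained via the Leray spectral sequence of $\pi_x$ together with Theorem \ref{vanish.high}, and which in genus $1$ and $2$ is handled by direct calculation) shows that the cokernel $W_2 H^2(\sm_{g,P};\QQ)$ is generated by the restrictions of $\kappa_1$ and the $\psi_p$. A parallel analysis using the left end of the sequence yields the vanishing $H^1(\sbarm_{g,P};\QQ) = 0$.

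Completeness of the relations follows by induction: any putative additional relation among the tautological classes would pull back, via $\pi_x$, $\xi_{irr}$ and $\xi_{a,A}$, to relations on moduli spaces of strictly smaller dimension, which by the induction hypothesis must already follow from the listed relations. Combining the explicit formulas of Lemma \ref{pullbacks} with Deligne's injectivity, one concludes that the original relation is itself a consequence of the listed ones. The main obstacle is twofold: first, avoiding circularity with Harer's theorem on $H^2(\sm_{g,P};\QQ)$, which elsewhere in the paper is re-derived as a \emph{consequence} of the present theorem, forces one to use a self-contained inductive computation of $W_2 H^2(\sm_{g,P};\QQ)$ via the Leray spectral sequence of the universal curve; second, the careful low-genus bookkeeping needed to extract the extra relations (\ref{kappag1})--(\ref{kpsidirr_rel}), which reflect special tautological identities on $\sbarm_{g,P}$ for $g \le 2$.
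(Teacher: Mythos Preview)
Your strategy for showing that the tautological classes generate $H^2(\sbarm_{g,P};\QQ)$ is circular in a way that your proposed fix does not repair. In the Gysin sequence you write down, the quotient $W_2 H^2(\sm_{g,P};\QQ)$ is \emph{by definition} the image of $H^2(\sbarm_{g,P};\QQ)$ in $H^2(\sm_{g,P};\QQ)$; saying it is spanned by the restrictions of $\kappa_1$ and the $\psi_p$ is equivalent to saying that $H^2(\sbarm_{g,P};\QQ)$ is spanned by these together with the boundary classes, which is exactly the assertion to be proved. You acknowledge this and propose to compute $W_2 H^2(\sm_{g,P};\QQ)$ independently via the Leray spectral sequence of $\pi_x$, but that sequence only transports information from $\sm_{g,P}$ to $\sm_{g,P\cup\{x\}}$: it gives no handle on the base case $H^2(\sm_g;\QQ)$ (or $H^2(\sm_{g,1};\QQ)$), and for $g\ge 3$ Harer's vanishing theorem says nothing in degree~$2$. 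Computing that base case is precisely the content of Harer's hard theorem on $H^2$, which the paper is trying to re-derive rather than assume.

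The paper runs the logic in the opposite direction. It uses only Harer's \emph{vanishing} theorem (Theorem~\ref{vanish.high}), dualized to the statement $H^k_c(M_{g,n};\QQ)=0$ for $k\le d(g,n)$, to conclude that the restriction $H^k(\sbarm_{g,P})\to H^k(\partial\sm_{g,P})$ is injective for $k\le d(g,n)$. Deligne's mixed-Hodge result (Theorem~\ref{deligne}) then upgrades this to injectivity of $H^k(\sbarm_{g,P})\to\bigoplus_i H^k(X_i)$, where the $X_i$ are products of moduli spaces of strictly smaller invariants. A double induction on $(g,n)$, together with the explicit pullback formulas of Lemma~\ref{pullbacks} and compatibility on the overlaps of the $X_i$, then pins down $H^2(\sbarm_{g,P};\QQ)$ completely; Harer's description of $H^2(\sm_{g,P};\QQ)$ is obtained afterwards, in Section~\ref{H1H2}, as a consequence of Theorem~\ref{coh_bar} via the Deligne spectral sequence. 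Your ``Deligne's injectivity'' for the clutching maps $\xi_{irr},\xi_{a,A}$ is also not the right formulation: these maps are not surjective onto $\sbarm_{g,P}$, so pullback along them is not individually injective; what one needs, and what the paper proves, is injectivity of the combined map to $\bigoplus_i H^k(X_i)$, and this genuinely requires the vanishing of $H^k_c(M_{g,n};\QQ)$.
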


\noindent
We will give a brief sketch of the proof of Theorem \ref{coh_bar}, and more precisely of the fact that
tautological classes generate $H^2(\Mgnbar;\QQ)$. The case $g=0$ is of course part of Keel's theorem. Let us then assume that $g>0$.
By Poincar\'e duality, which holds with rational coefficients since $\sm_{g,n}$ is a smooth orbifold, we can express Theorem \ref {vanish.high} in terms of cohomology with compact support. We set
\begin{equation*}
d(g,n)=\left\{\aligned n-4\quad&\text{if}\quad g=0\,;\\
2g-2\quad&\text{if}\quad g>0,\,\,\, n=0\,;\\
2g-3+n\quad&\text{if}\quad g>0,\,\,\, n>0\,.
\endaligned\right.
\end{equation*}
Dualizing Theorem \ref {vanish.high} we then get:
\begin{equation}
H^k_c(M_{g,n};\QQ)=0\,, \quad\text{for}\quad k\leq d(g,n)\label{vanish.low}
\end{equation}
We look at the exact sequence of \it rational \rm cohomology with compact support
\begin{equation*}
\dots\to H^k_c(\Mgn)\to H^k(\Mgnbar )\to H^k(\partial\Mgn)\to H^{k+1}_c(\Mgn)\to\dots
\end{equation*}
Here and in the sequel, when we omit mention of the coefficients in cohomology, we always assume $\QQ$-coefficients.
Using (\ref {vanish.low}), it follows that

\begin{proposition}{\label{inj.cell}} The restriction map
\begin{equation*}
H^k(\Mgnbar )\to H^k(\partial\Mgn)
\end{equation*}
is injective for $ k\leq d(g,n)$, and is an isomorphism for $ k<d(g,n)$.
\end{proposition}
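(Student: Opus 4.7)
The proposition is essentially an immediate consequence of the long exact sequence of rational cohomology with compact support displayed just before the statement, combined with the vanishing (\ref{vanish.low}); my plan is to spell out the two pieces of that diagram chase.

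First I would record the relevant segment of the long exact sequence at degree $k$:
\begin{equation*}
H^k_c(\Mgn) \longrightarrow H^k(\Mgnbar) \stackrel{r^k}{\longrightarrow} H^k(\partial\Mgn) \longrightarrow H^{k+1}_c(\Mgn)\,.
\end{equation*}
The kernel of $r^k$ coincides with the image of $H^k_c(\Mgn) \to H^k(\Mgnbar)$, while surjectivity of $r^k$ is equivalent to the connecting map $H^k(\partial\Mgn) \to H^{k+1}_c(\Mgn)$ being zero. So everything is reduced to vanishing statements for $H^\bullet_c(\Mgn;\QQ)$.

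Next I would invoke (\ref{vanish.low}): for $k \le d(g,n)$ we have $H^k_c(\Mgn;\QQ)=0$, and hence the image of $H^k_c(\Mgn) \to H^k(\Mgnbar)$ is trivial, proving injectivity of $r^k$ in that range. If moreover $k < d(g,n)$, then $k+1 \le d(g,n)$, so $H^{k+1}_c(\Mgn;\QQ)=0$ as well; this forces the connecting homomorphism $H^k(\partial\Mgn) \to H^{k+1}_c(\Mgn)$ to be zero, so $r^k$ is surjective and therefore an isomorphism.

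There is really no obstacle here beyond bookkeeping, because all of the substantial content has already been assembled: Harer's vanishing Theorem \ref{vanish.high} provides the homology vanishing, Poincar\'e duality on the smooth orbifold $\sm_{g,n}$ (valid with $\QQ$-coefficients) translates it into the cohomology-with-compact-support vanishing (\ref{vanish.low}) in the stated range, and exactness of the long exact sequence of the pair $(\Mgnbar,\partial\Mgn)$ does the rest. The only mild care required is to note that the duality shift matches the three cases of $c(g,n)$ and $d(g,n)$ listed, which one can verify by comparing $2\dim_\CC \Mgn - c(g,n)-1 = d(g,n)$ in each of the subcases $g=0$, $g>0$ with $n=0$, and $g>0$ with $n>0$.
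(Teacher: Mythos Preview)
Your argument is correct and is exactly the one the paper uses: the paper simply says ``Using (\ref{vanish.low}), it follows that'' and then states the proposition, leaving the diagram chase implicit, whereas you have written it out in full. Your verification of the shift $2\dim_\CC \Mgn - c(g,n) - 1 = d(g,n)$ is also correct in all three subcases.
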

\noindent
Let us now consider the irreducible components of the boundary $\partial\Mgn$. As we observed in section \ref{strata},
each of these components is the image of a map
$\mu_i\colon X_i\to\Mgnbar$ where $X_i$ can be of two different kinds. Either $X_i=\overline{M}_{g-1,n+2}$,
and $\mu_i$ is obtained by identifying the last two marked points of each $(n+2)$-pointed curve of genus $g-1$,
or $X_i=\overline{M}_{p,a+1}\times \overline{M}_{q,b+1}$, where $p+q=g$, $a+b=n$, $2p-2+a\geq 0$,
$2q-2+b\geq 0$ and
$\mu_i$ is obtained by identifying the $(a+1)$-st marked point of an $(a+1)$-pointed genus $p$ curve with the
$(b+1)$-st marked point of a $(b+1)$-pointed genus $q$ curve.
Let us then consider the composition
\begin{equation*}
\sigma\colon \coprod_{i=1}^N X_i\to\partial\Mgn\to\Mgnbar\,.
\end{equation*}
One can prove a stronger version of Proposition \ref{inj.cell} stating that, in degrees not bigger than
$d(g,n)$, the morphism $\sigma$ also induces an injection in cohomology. This result is somewhat surprising, especially when
the degree $k$ is less than $d(g,n)$. In this case, it implies that the degree $k$ cohomology of $\partial\Mgn$ injects in
the degree $k$ cohomology of the disjoint union of the $X_i$. This shows that the geometry arising from the way in which the various strata of $\partial\Mgn$ intersect each other does not contribute to the cohomology, at least in relatively low range.
\vskip 0.3 cm
\begin{theorem}{\label{inj.2.cell}}The composition map $\sigma$ induces an injective homomorphism
\begin{equation}
\gamma\colon H^k(\Mgnbar)\to \overset{N}{\underset{i=1}\oplus}H^k(X_i)
\label{gamma}
\end{equation}
for $ k\leq d(g,n)$.
\end{theorem}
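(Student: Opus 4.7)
The plan is to combine Proposition \ref{inj.cell} with Deligne's mixed Hodge theory of a normal crossings divisor \cite{deligne3}, applied to $D=\partial\Mgn$. The map $\sigma$ is essentially the normalization of $D$ composed with the inclusion into $\Mgnbar$: each $X_i$ is either the normalization of an irreducible boundary component or, in the cases of $D_{irr}$ and of self-glued $D_{a,A}$ with $g=2a$, $A=A^{c}$, a finite cover of it. In either case, rational pullback from the normalization to $X_i$ is injective, so I expect $\sigma^{*}$ to annihilate exactly the piece of $H^{*}(D;\QQ)$ killed by passing to the top weight graded piece.

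I would set up Deligne's weight spectral sequence for $D$. Writing $D^{[p]}$ for the disjoint union of the normalizations of the codimension-$p$ strata, one has a first-quadrant spectral sequence of mixed Hodge structures
\begin{equation*}
E_1^{p,q}=H^{q}(D^{[p]};\QQ)\;\Longrightarrow\; H^{p+q}(D;\QQ),
\end{equation*}
in which $E_1^{p,q}$ is pure of weight $q$ (each $D^{[p]}$ is smooth projective as an orbifold). Subquotients of pure Hodge structures remain pure of the same weight, so every $E_r^{0,k}$ is pure of weight $k$ while its target $E_r^{r,k-r+1}$ is pure of weight $k-r+1$; by strictness of MHS morphisms, the differentials $d_r$ out of $E_r^{0,k}$ vanish for all $r\geq 2$. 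Hence the edge map furnishes an inclusion
\begin{equation*}
\operatorname{Gr}_k^{W}H^{k}(D;\QQ)=E_\infty^{0,k}\hookrightarrow E_1^{0,k}=H^{k}(D^{[0]};\QQ),
\end{equation*}
and postcomposing with the rationally injective pullback $H^{k}(D^{[0]})\hookrightarrow\bigoplus_{i=1}^{N}H^{k}(X_i;\QQ)$ realizes precisely $\operatorname{Gr}_k^W\sigma^{*}$.

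Since $\Mgnbar$ is smooth and proper as an orbifold, $H^{k}(\Mgnbar;\QQ)$ is pure of weight $k$. The restriction $H^{k}(\Mgnbar)\to H^{k}(D)$ is a morphism of MHS, so its image lies in $W_{k}H^{k}(D)$; by strictness in $W_\bullet$, any class in $H^{k}(\Mgnbar)$ mapping into $W_{k-1}H^{k}(D)$ must come from $W_{k-1}H^{k}(\Mgnbar)=0$ and so is itself zero. Thus the composite $H^{k}(\Mgnbar)\to\operatorname{Gr}_k^{W}H^{k}(D)$ has the same kernel as the restriction itself, which by Proposition \ref{inj.cell} is trivial for $k\leq d(g,n)$. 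Composing with the embedding of the previous paragraph yields the injectivity of $\gamma$ in that range.

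The only substantive ingredient is Deligne's weight spectral sequence for the orbifold NCD $\partial\Mgn$, which I regard as the main technical point. The standard resolution is to pass to a level cover $M_{g,n}[m]$ with $m\geq 3$, on which the boundary becomes an honest NCD in a smooth projective scheme; one applies \cite{deligne3} verbatim there and descends to $\QQ$-cohomology on the orbifold by averaging over the finite deck group. Alternatively, one can construct the semi-simplicial resolution of $D$ directly in the stacky setting. Either route is routine and completes the argument.
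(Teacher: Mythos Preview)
Your argument is correct and follows essentially the same strategy as the paper: combine Proposition~\ref{inj.cell} with the purity of $H^k(\Mgnbar)$ and strictness of morphisms of mixed Hodge structures to show that the image of $H^k(\Mgnbar)$ in $H^k(\partial\Mgn)$ avoids $W_{k-1}$, hence survives to the normalization. The only difference is in how the Hodge-theoretic input is packaged: the paper quotes Deligne's general result (\cite{deligne3}, Proposition~(8.2.5)) that for any proper surjective map $X\to Y$ with $X$ smooth and $Y$ complete, the kernel of $H^k(Y)\to H^k(X)$ is exactly $W_{k-1}H^k(Y)$, applied to a smooth variety cover $\coprod Z_i\to\partial\Mgn$; you instead unpack this via the weight spectral sequence of the NCD $\partial\Mgn$, identifying $\operatorname{Gr}_k^W H^k(\partial\Mgn)$ with $E_\infty^{0,k}\subset E_1^{0,k}$. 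Your route uses a bit more machinery (the full stratification rather than just the normalization) to reach the same conclusion, but it is perfectly sound, and your handling of the orbifold issue in the last paragraph is appropriate.
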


\begin{proof}As proved in \cite{loo}, Section 2, and \cite{boggi}, Proposition 2.6, the moduli spaces $\Mgnbar$ are quotients of smooth complete varieties modulo the action of finite groups (see also \cite{acv} or \cite{gacII}, Chapter 17). Thus the same is true for each of the $X_i$; we write $X_i=Z_i/G_i$, where $Z_i$ is smooth and $G_i$ is a finite group. It will then suffice to prove the injectivity of the map
\begin{equation*}
H^k(\Mgnbar)\to \overset{N}{\underset{i=1}\oplus}H^k(Z_i)
\end{equation*}
in the given range. For this we use the following result in Hodge theory, due to Deligne (\cite{deligne3}, Proposition (8.2.5)).

\begin{theorem}{\label{deligne}} Let $Y$ be a complete variety. If $X\to Y$ is a proper surjective morphism and $X$ is smooth,
then the weight $k$ quotient of $H^k(Y;\QQ)$ is the image of $H^k(Y;\QQ)$
in $H^k(X;\QQ)$.
\end{theorem}
\noindent
We use this result by taking as $Y$ the boundary $\partial\Mgn$ and as $X$ the disjoint union of the $Z_i$.
In particular, $H^k(\partial\Mgn)/W_{k-1}(H^k(\partial\Mgn))$ injects in $H^k(X)$.
By the previous proposition we know that, in the given range, the map
\begin{equation*}
\rho\colon H^k(\Mgnbar)\to H^k(\partial\Mgn)
\end{equation*}
is injective. We then have to show that the intersection of $W_{k-1}(H^k(\partial\Mgn))$ with the image of $\rho$ is zero.
But this is evident because $\rho$ is a morphism of mixed Hodge structures, and hence strictly compatible with filtrations.
In fact
\begin{equation*}
\rho(H^k(\Mgnbar))\cap W_{k-1}(H^k(\partial\Mgn))=\rho (W_{k-1}(H^k(\Mgnbar)))=\rho(\{0\})=0\,,
\end{equation*}
since $H^k(\Mgnbar)$ is of pure weight $k$. The proof of Theorem \ref{inj.2.cell} is now complete.
\end{proof}

An elementary application of Theorem \ref{inj.2.cell} is the following.

\begin{corollary}{\label{h1.cell}} For all $g$ and $n$ such that $2g-2+n>0$, $H^1(\Mgnbar;\QQ)=0$.
\end{corollary}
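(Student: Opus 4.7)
The plan is to apply Theorem \ref{inj.2.cell} at $k=1$ and proceed by induction on the dimension $3g-3+n$ of $\Mgnbar$. First I would verify for which stable pairs $(g,n)$ the condition $d(g,n)\ge 1$ actually holds. Going through the three cases in the definition of $d(g,n)$, this fails only for $(g,n)\in\{(0,3),(0,4),(1,1)\}$, and these serve as the base of the induction: $\barm_{0,3}$ is a point, while $\barm_{0,4}$ and $\barm_{1,1}$ are each isomorphic to $\PP^1$, so $H^1(\barm_{g,n};\QQ)=0$ is immediate.

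For the inductive step, assume $(g,n)$ is stable with $d(g,n)\ge 1$, and that $H^1(\barm_{g',n'};\QQ)=0$ is already known whenever $2g'-2+n'>0$ and $3g'-3+n'<3g-3+n$. Theorem \ref{inj.2.cell} supplies an injection
\begin{equation*}
\gamma\colon H^1(\Mgnbar;\QQ)\hookrightarrow \bigoplus_{i=1}^N H^1(X_i;\QQ),
\end{equation*}
where each $X_i$ is either $\barm_{g-1,n+2}$ or a product $\barm_{p,a+1}\times\barm_{q,b+1}$ with $p+q=g$, $a+b=n$, and both factors stable. It therefore suffices to show that every $H^1(X_i;\QQ)$ vanishes.

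In the irreducible case, $3(g-1)-3+(n+2)=3g-4+n<3g-3+n$, so the inductive hypothesis applies directly to $\barm_{g-1,n+2}$. In the reducible case, the stability bounds $2p-2+(a+1)>0$ and $2q-2+(b+1)>0$ force $3p-3+(a+1)<3g-3+n$ and $3q-3+(b+1)<3g-3+n$, so the inductive hypothesis applies to each factor. Since the moduli spaces involved are irreducible (hence connected) with $H^0=\QQ$, the Künneth formula with rational coefficients gives
\begin{equation*}
H^1(\barm_{p,a+1}\times\barm_{q,b+1};\QQ)\cong H^1(\barm_{p,a+1};\QQ)\oplus H^1(\barm_{q,b+1};\QQ),
\end{equation*}
and both summands vanish by induction. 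Hence $\bigoplus_i H^1(X_i;\QQ)=0$, and injectivity of $\gamma$ yields $H^1(\Mgnbar;\QQ)=0$.

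The essential work has already been done in Theorem \ref{inj.2.cell}; the only potential obstacle here is confirming that the induction is well-founded, i.e., that every boundary component $X_i$ is indexed by a strictly smaller value of $3g-3+n$. This is built into the dimension count above, and no further case-checking is required beyond isolating the three low-dimensional base cases where $d(g,n)<1$.
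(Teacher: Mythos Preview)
Your argument is correct and follows essentially the same route as the paper: apply Theorem \ref{inj.2.cell} with $k=1$, then induct so that each $H^1(X_i;\QQ)$ vanishes by the K\"unneth formula and the inductive hypothesis. The only cosmetic difference is that you induct on the single parameter $3g-3+n$, whereas the paper phrases it as a double induction on $g$ and $n$; your inclusion of $(0,4)$ among the base cases is in fact slightly more careful than the paper's statement, which mentions only $(0,3)$ and $(1,1)$.
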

\begin{proof}The theorem is true for $\barm_{0,3}=\{pt\}$ and $\barm_{1,1}=\PP^1$.
Except in these two cases, $1\geq d(g,n)$, so that the homomorphism
$H^1(\Mgnbar)\to \overset{N}{\underset{i=1}\oplus}H^1(X_i)$ is always injective. Now $X_i$ is either
the image of $\overline{M}_{g-1,n+2}$
or the image of $\overline{M}_{p,a+1}\times \overline{M}_{q,b+1}$, where $p+q=g$, $a+b=n$.
Therefore, by the K\"unneth formula, $H^1(\Mgnbar)$ injects in a direct sum of first cohomology groups of moduli spaces
$\overline{M}_{p,\nu}$ where either $p=g-1$ or $p=g$ and $\nu<n$. The result follows by double induction on $g$ and $n$.
\end{proof}

The proof of the statement concerning $H^2(\Mgnbar)$ in Theorem \ref{coh_bar} also proceeds by double induction on $(g,n)$, starting from the initial cases $(1,1)$ and $(1,2)$, which can be easily treated directly.

The strategy for the inductive step is quite simple and boils down to elementary linear algebra. The idea is to use Theorem \ref{inj.2.cell}.
Suppose we want to show that
$H^2(\Mgnbar)$ is generated by tautological classes, assuming the same is known to be
true in genus less then $g$, or in genus $g$ but with fewer than $n$ marked points.
Look at the injective homomorphism (\ref{gamma}) and denote by $\gamma_i$ the composition of $\gamma$ with the projection onto $H^2(X_i)$.
By induction hypothesis, each summand $H^2(X_i)$ is
generated by tautological classes, all relations among which are known. Since one has a complete control on the effect of
each map $H^2(\Mgnbar)\to H^2(X_i)$ on tautological classes,
the subspace of $H^2(X_i)$, generated by the images of the tautological classes in $H^2(\Mgnbar)$
is known.
On the other hand, given any class in
$H^2(\Mgnbar)$, its restrictions to the $X_i$
satisfy obvious compatibility relations on the ``intersections'' of the $X_i$. The subspace $W\subset\bigoplus H^2(X_i)$ defined by these compatibility relations can be
completely described because the spaces $H^2(X_i)$ are generated by tautological classes. By elementary linear algebra computations one shows that
$W$ is equal to the image of $H^2(\Mgnbar)$ in
$\bigoplus H^2(X_i)$. Since $H^2(\Mgnbar)$ is known to inject in
$\bigoplus H^2(X_i)$, this concludes the proof that $H^2(\barm_{g,P};\QQ)$ is as described in the statement of Theorem \ref{coh_bar}.

As for the last statement of the theorem, notice first that, as is the case for rational cohomology, the rational Picard group of the space $\barm_{g,n}$ coincides with the one of the smooth orbifold $\sbarm_{g,n}$. But then $\operatorname{Pic}(\sbarm_{g,n})\otimes\QQ\to H^2(\sbarm_{g,n};\QQ)$ is onto since $H^2(\sbarm_{g,n};\QQ)$ is generated by divisor classes, and is injective since $H^1(\sbarm_{g,n};\QQ)$ always vanishes.

\bigskip
We finally recall that the actual Picard group of $\sbarm_{g,n}$ is also known. The following result is proved in \cite{acpic} (Theorem 2, page 163), using the results of \cite{harer}.

\begin{theorem}{\label{picz}}For all $g\ge 3$ and all $n$, $\operatorname{Pic}(\sbarm_{g,n})$ is freely generated by $\lambda$, the $\psi_i$, and the boundary classes.
\end{theorem}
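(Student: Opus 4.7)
The plan is to upgrade the rational description of $H^2(\sbarm_{g,n};\QQ)$ furnished by Theorem \ref{coh_bar} to an \emph{integral} description of $\operatorname{Pic}(\sbarm_{g,n})$. The two crucial additional ingredients will be the orbifold exponential sequence, which converts the Picard group into $H^2(\sbarm_{g,n};\ZZ)$, and Harer's integral computation in \cite{harer}, which pins down $H^1$ and $H^2$ of the open stratum $\sm_{g,n}$ with $\ZZ$-coefficients.

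First I would identify $\operatorname{Pic}(\sbarm_{g,n})\cong H^2(\sbarm_{g,n};\ZZ)$. Consider the exponential sequence on the smooth orbifold $\sbarm_{g,n}$. One needs the vanishing of $H^1(\sbarm_{g,n};\mathcal{O})$ and $H^2(\sbarm_{g,n};\mathcal{O})$. The first is immediate because $H^1(\sbarm_{g,n};\QQ)=0$ by Corollary \ref{h1.cell}, which by Hodge theory forces $h^{0,1}=0$. The second vanishes because, by Theorem \ref{coh_bar}, $H^2(\sbarm_{g,n};\QQ)$ is spanned by divisor classes, so it is of pure Hodge type $(1,1)$ and $h^{0,2}=0$.

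Next I would compute $H^2(\sbarm_{g,n};\ZZ)$ via the Gysin long exact sequence for the inclusion $\sm_{g,n}\subset\sbarm_{g,n}$ of the complement of the normal-crossings boundary. In low degree this reads
\begin{equation*}
H^1(\sm_{g,n};\ZZ)\longrightarrow H^0(\td;\ZZ)\longrightarrow H^2(\sbarm_{g,n};\ZZ)\longrightarrow H^2(\sm_{g,n};\ZZ)\longrightarrow H^1(\td;\ZZ),
\end{equation*}
where $\td$ is the normalization of $\partial\sm_{g,n}$ and the second map sends the fundamental class of a connected component to the class of the corresponding boundary divisor. By \cite{harer}, for $g\ge 3$ and any $n$ we have $H^1(\sm_{g,n};\ZZ)=0$ and $H^2(\sm_{g,n};\ZZ)$ is a \emph{free} abelian group generated by $\lambda$ and $\psi_1,\dots,\psi_n$. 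The vanishing of $H^1(\sm_{g,n};\ZZ)$ makes the Gysin map $H^0(\td;\ZZ)\hookrightarrow H^2(\sbarm_{g,n};\ZZ)$ injective, so the $\ZZ$-span of the boundary divisor classes inside $H^2(\sbarm_{g,n};\ZZ)$ is freely generated by the connected components of $\td$, modulo only the tautological identifications $\delta_{a,A}=\delta_{g-a,A^c}$ coming from the description $\td_\Gamma=\sbarm_\Gamma/\aut(\Gamma)$ in Section \ref{strata}. On the other hand, since $\lambda$ and the $\psi_i$ extend from $\sm_{g,n}$ to well-defined classes on $\sbarm_{g,n}$, the restriction map $H^2(\sbarm_{g,n};\ZZ)\to H^2(\sm_{g,n};\ZZ)$ is surjective and splits. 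Assembling these two facts, $H^2(\sbarm_{g,n};\ZZ)$ is a free abelian group on $\lambda$, the $\psi_i$, and the independent boundary classes, which combined with Step 1 gives the claimed structure of $\operatorname{Pic}(\sbarm_{g,n})$.

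The main obstacle is the reliance on Harer's \emph{integral} computation of $H^1(\sm_{g,n};\ZZ)$ and $H^2(\sm_{g,n};\ZZ)$; this is the transcendental input that cannot be bypassed by the methods of the present survey. Rationally, everything is already covered by Theorem \ref{coh_bar} together with our vanishing result. The delicate point is precisely the absence of torsion, which in the above strategy is forced by the freeness of Harer's $H^2(\sm_{g,n};\ZZ)$ and the injectivity of the Gysin map, itself a consequence of $H^1(\sm_{g,n};\ZZ)=0$. A secondary technical difficulty is making the Gysin long exact sequence rigorous with $\ZZ$-coefficients in the orbifold setting; this uses the identification of $\td$ with $\coprod_\Gamma\sbarm_\Gamma/\aut(\Gamma)$ established in Section \ref{strata} and the fact that the relevant local monodromy around each boundary component is trivial on $H^0$.
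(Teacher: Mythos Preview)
The paper does not itself prove Theorem \ref{picz}; it merely records that the result is established in \cite{acpic} using Harer's computation from \cite{harer}. So there is no in-paper argument to compare against, only the indication that Harer's integral input is the essential transcendental ingredient---which is exactly what you isolate.

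Your overall strategy is sound and is close in spirit to what \cite{acpic} does, but the route differs in two respects worth flagging. First, \cite{acpic} works directly with Picard groups rather than passing through $H^2(-;\ZZ)$ via the exponential sequence: the excision sequence $\bigoplus\ZZ[D_i]\to\operatorname{Pic}(\sbarm_{g,n})\to\operatorname{Pic}(\sm_{g,n})\to 0$ plays the role of your Gysin sequence, and \emph{independence} of the generators (i.e.\ injectivity on the left) is proved not by invoking $H^1(\sm_{g,n};\ZZ)=0$ but by evaluating the classes on explicit one-parameter test families and checking that the resulting intersection matrix is nonsingular. Your cohomological route is cleaner in principle but hides this geometric content. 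Second, a technical caution: the five-term ``Gysin'' sequence you write is only correct through the term $H^2(\sm_{g,n};\ZZ)$; for a normal-crossings boundary the next term is not simply $H^1(\td;\ZZ)$ but a piece of Deligne's spectral sequence involving $\td^{[2]}$ as well. This does not damage your argument, since you establish surjectivity of $H^2(\sbarm_{g,n};\ZZ)\to H^2(\sm_{g,n};\ZZ)$ by the independent observation that $\lambda$ and the $\psi_i$ lift.

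There is one genuine gap. You assert that \cite{harer} gives $H^2(\sm_{g,n};\ZZ)$ free on $\lambda,\psi_1,\dots,\psi_n$ for all $g\ge 3$, but as the paper itself notes just before the statement of the theorem, Harer's integral computation in \cite{harer} is only for $g>4$. The cases $g=3,4$ require additional work, and the identification of the explicit generators (as opposed to the abstract rank) is part of what \cite{acpic} actually proves rather than something one can simply quote from \cite{harer}. So your sketch is accurate as a reduction of the problem to the right transcendental input, but that input is slightly more than what \cite{harer} alone provides.
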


\noindent It is possible to explicitly describe $\operatorname{Pic}(\sbarm_{g,n})$ for $g<3$, but we will not do it here; we just observe that the case $g=0$ is covered by Keel's theorem in Section \ref{genus0}.

\medskip
\section{Deligne's spectral sequence}\label{deligneseq}

We recall Deligne's ``Gysin'' spectral sequence
computing the cohomology of non-singular varieties (see \cite{deligne2}, 3.2.4.1).
Let $X$ be a complex manifold and let $D$ be a divisor with normal crossings in $X$. Set $V=X\smallsetminus D$.
The Gysin spectral sequence we are about to define will compute the cohomology of $V$.
Locally inside $X$, the divisor $D$ looks like a union of coordinate hyperplanes. Denote by
$D^{[p]}$ the union of the points of multiplicity at least $p$ in $D$ and by $\widetilde D^{[p]}$
the normalization of $D^{[p]}$.
We also set
\begin{equation*}
\widetilde D^{[0]}=D^{[0]}=X\,.
\end{equation*}
Concretely, a point $y$ of $\td^{[p]}$ can be thought of as the datum of a point $x$ in $D^{[p]}$ and of $p$ local components of $D$ through $x$. We denote by $E_p(y)$ the set of these components.
The sets $E_p(y)$ form a local system
on $\widetilde D^{[p]}$ which we denote by $E_p$.
The set of orientations of $E_p(y)$ is by definition the set of generators of $\wedge^p\ZZ^{E_p(y)}$. The local system of orientations of $E_p$
defines on $\widetilde D^{[p]}$ a local system of rank one
\begin{equation*}
\varepsilon^p=\wedge^p\QQ^{E_p}
\end{equation*}
While $D^{[p]}$ is clearly included in $D^{[p-1]}$, in general there is no natural map from $\td^{[p]}$ to $\td^{[p-1]}$. What we have instead is a natural correspondence between the two. We let $\td^{[p-1,p]}$ be the space whose points are pairs $(y,L)$, where $y=(x,E_p(y))\in \td^{[p]}$ and $L\in E_p(y)$. We then have morphisms
\begin{equation*}
\xymatrix
{
\td^{[p-1,p]}\ar[d]^{\pi^{[p-1,p]}}\ar[rr]^{\xi^{[p-1,p]}}&&\td^{[p-1]}\\
\td^{[p]}&
}
\end{equation*}
\begin{equation*}
\pi^{[p-1,p]}(y,L)=y\,;\qquad \xi^{[p-1,p]}(y,L)=(x,E_p(y)\smallsetminus\{L\})\,.
\end{equation*}
Moreover, there is a natural isomorphism
\begin{equation*}
({\pi^{[p-1,p]}})^*\varepsilon^p \cong ({\xi^{[p-1,p]}})^*\varepsilon^{p-1}
\end{equation*}
This makes it possible to define a generalized Gysin homomorphism
\begin{equation*}
\xymatrix{
H^{i}(\widetilde D^{[p]};\varepsilon^p)\ar[r]^{{\pi}^*\hskip70pt}&
H^{i}(\widetilde D^{[p-1,p]};{\pi}^*\varepsilon^p)\cong
H^{i}(\widetilde D^{[p-1,p]};{\xi}^*\varepsilon^{p-1})\ar[r]^{\hskip55pt{\xi}_*}&
H^{i+2}(\widetilde D^{[p-1]};\varepsilon^{p-1})
}
\end{equation*}
where, for brevity, we have written $\pi$ and $\xi$ for $\pi^{[p-1,p]}$ and $\xi^{[p-1,p]}$, respectively.

\bigskip
The following theorem by Deligne holds.
\begin{theorem}{\label{del_spec}}There is a spectral sequence, abutting to $H^*(V;\QQ)$ and with
$E_2=E_\infty$,
whose $E_1$-term is given by
\begin{equation*}
E_1^{-p,q}=\left\{\aligned
&H^{q-2p}(\widetilde D^{[p]};\varepsilon^p)\quad\text{for}\quad p>0\,,\\
&H^q(X;\QQ)\qquad\,\,\quad\text{for}\quad p=0\,,\\
&0\qquad\qquad\,\qquad\quad\text{for}\quad p<0\,.
\endaligned\right.
\end{equation*}
Moreover, the differential
\begin{equation*}
d_1\colon \,H^{q-2p}(\widetilde D^{[p]};\varepsilon^p)\to
H^{q-2p+2}(\widetilde D^{[p-1]};\varepsilon^{p-1})
\end{equation*}
is the Gysin homomorphism $({\xi^{[p-1,p]}})_*({\pi^{[p-1,p]}})^*$.
\end{theorem}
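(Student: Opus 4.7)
The plan is to realize Deligne's spectral sequence as the one associated to the weight filtration on the logarithmic de Rham complex $\Omega_X^\bullet(\log D)$. I would begin by recalling that the inclusion $j\colon V\hookrightarrow X$ induces a quasi-isomorphism $\Omega_X^\bullet(\log D)\to Rj_*\Omega_V^\bullet$; combined with the holomorphic Poincar\'e lemma on $V$, this yields $\mathbb{H}^*(X,\Omega_X^\bullet(\log D))=H^*(V;\CC)$. Equip this complex with the increasing weight filtration $W$, where $W_p\Omega_X^q(\log D)$ is locally spanned by forms with at most $p$ logarithmic poles, and consider the spectral sequence of this filtered complex. To obtain the $\QQ$-structure implicit in the statement, one invokes Deligne's rational weight filtration on $Rj_*\QQ_V$, whose complexification is filtered quasi-isomorphic to the log de Rham complex.

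Next I would compute the $E_1$ page via the Poincar\'e residue map. In local coordinates in which $D$ is a union of coordinate hyperplanes, a generator of $\operatorname{Gr}_p^W \Omega_X^\bullet(\log D)$ can be written as $\frac{dz_{i_1}}{z_{i_1}}\wedge\cdots\wedge\frac{dz_{i_p}}{z_{i_p}}\wedge\alpha$ with $\alpha$ holomorphic, and sending it to $\alpha|_{z_{i_1}=\cdots=z_{i_p}=0}$, tensored with the orientation $e_{i_1}\wedge\cdots\wedge e_{i_p}$, yields a quasi-isomorphism
\begin{equation*}
\operatorname{Res}\colon \operatorname{Gr}_p^W \Omega_X^\bullet(\log D)\xrightarrow{\sim} (a_p)_*\Omega_{\td^{[p]}}^{\bullet-p}\otimes\varepsilon^p\,,
\end{equation*}
where $a_p\colon \td^{[p]}\to X$ is the natural finite map; the local system $\varepsilon^p$ absorbs the sign ambiguity coming from the ordering of the logarithmic factors. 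Since $\td^{[p]}$ is smooth, the hypercohomology of the right-hand side is just $H^{\bullet-p}(\td^{[p]};\varepsilon^p)$, and unwinding the standard indexing of the weight spectral sequence reproduces the $E_1$ term stated in the theorem (with $E_1^{0,q}=H^q(X;\QQ)$ reflecting the unfiltered piece $W_0=\Omega_X^\bullet$).

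I expect the identification of $d_1$ with $(\xi^{[p-1,p]})_*(\pi^{[p-1,p]})^*$ to be the main obstacle. It is a local computation on a representative $\omega\in W_p$: after modifying $\omega$ by a $W_{p-1}$ correction so that $d\omega$ lies in $W_{p-1}$, one must trace $[d\omega]\in\operatorname{Gr}_{p-1}^W$ through the residue isomorphism. Each of the $p$ logarithmic factors of $\omega$ contributes one branch of $\td^{[p-1]}$ passing through the given component of $\td^{[p]}$, and summing over these $p$ choices produces precisely the data packaged by the correspondence $\td^{[p-1,p]}$; the isomorphism $(\pi^{[p-1,p]})^*\varepsilon^p\cong(\xi^{[p-1,p]})^*\varepsilon^{p-1}$ built into the setup handles the sign bookkeeping, matching the local expression with $(\xi^{[p-1,p]})_*(\pi^{[p-1,p]})^*$. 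Finally, the degeneration $E_2=E_\infty$ is automatic from mixed Hodge theory: the filtered complex $(\Omega_X^\bullet(\log D),W,F)$ underlies Deligne's mixed Hodge structure on $H^*(V;\QQ)$, so each $E_r$ term for $r\ge 1$ carries a pure Hodge structure of a fixed weight, and a nonzero $d_r$ with $r\ge 2$ would be a morphism between pure Hodge structures of distinct weights, which necessarily vanishes.
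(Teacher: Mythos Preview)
Your sketch is a correct outline of Deligne's original argument from \emph{Th\'eorie de Hodge II}: the weight filtration on the logarithmic de Rham complex, the Poincar\'e residue identification of the graded pieces, the interpretation of $d_1$ as a Gysin map via the correspondence $\td^{[p-1,p]}$, and the $E_2$-degeneration from the purity of Hodge structures on the $E_1$ terms. The indexing and the role of the orientation system $\varepsilon^p$ are handled accurately.

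The paper, however, does not prove this theorem at all. It simply states the result with a reference to Deligne (\cite{deligne2}, 3.2.4.1), and then remarks that the same statement holds in the orbifold setting because the local arguments in Propositions~3.6 and~3.13 of \cite{delignereg} and Proposition~3.1.8 of \cite{deligne2} carry over verbatim. So there is no proof in the paper to compare against; you have supplied precisely the argument the paper cites but omits. If anything, you might add a sentence noting that the construction is local in nature and hence extends to orbifolds with normal-crossings boundary, since that is the form in which the paper actually applies the theorem.
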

\noindent
The above theorem also holds in the case in which $X$ is an orbifold and $D$ an orbifold divisors with normal crossing.
Indeed, in view of the local nature of their proofs, one can immediately see that Proposition 3.6, Proposition 3.13 in \cite{delignereg}
and Proposition 3.1.8 in
\cite{deligne2} all
hold in the orbifold situation.
We can then apply the preceding theorem to the situation in which
$X=\sbarm_{g,P}$, $D=\partial \sm_{g,P}$ and $V=\sm_{g,P}$. To state the result one obtains we choose, once and for all, a representative in each isomorphism class of $P$-pointed, genus $g$ stable graphs with exactly $p$ edges, and let $\mathcal G^p_{g,P}$ denote the (finite) set of these representatives. We then get the following.

\begin{theorem}{\label{deligne_mgnbar}} There is a spectral sequence, abutting to $H^*(M_{g,P};\QQ)$
and with
$E_2=E_\infty$,
whose $E_1$-term is given by
\begin{equation*}
E_1^{-p,q}=\left\{\aligned
&\bigoplus_{\Gamma\in \mathcal G^p_{g,P}} H^{q-2p}(\widetilde D_{\Gamma}; \varepsilon ^p)\quad\text{for}\quad p>0\,,\\
&H^q(\barm_{g,P};\QQ)\qquad\,\,\qquad\text{for}\quad p=0\,,\\
&0\qquad\qquad\qquad\qquad\ \quad\text{for}\quad p<0\,.
\endaligned
\right.
\end{equation*}
Moreover, with the notation introduced at the end of section \ref{strata}, the differential
\begin{equation*}
d_1\colon \bigoplus_{\Gamma\in \mathcal G^p_{g,P}} H^{q-2p}(\widetilde D_{\Gamma};\varepsilon^p)\to
\bigoplus_{\Gamma'\in \mathcal G^{p-1}_{g,P}} H^{q-2p+2}(\widetilde D_{\Gamma'}; \varepsilon^{p-1})
\end{equation*}
of this spectral sequence is the Gysin map
\begin{equation*}
\sum_{\Gamma\in \mathcal G^p_{g,P}\,,\,\Gamma'\in \mathcal G^{p-1}_{g,P}\,,\, \Gamma<\Gamma'} (\widetilde\xi_{\Gamma',\Gamma})_*(\widetilde\pi_{\Gamma',\Gamma})^*
\end{equation*}
\end{theorem}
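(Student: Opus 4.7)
The plan is to specialize Theorem \ref{del_spec} to the triple $(X,D,V) = (\sbarm_{g,P}, \partial\sm_{g,P}, \sm_{g,P})$. By the remark immediately following Theorem \ref{del_spec}, the same spectral sequence exists when $X$ is an orbifold and $D$ an orbifold normal crossings divisor, which is exactly our situation. The content of the theorem then reduces to translating the four ingredients $\td^{[p]}$, $\varepsilon^p$, $\td^{[p-1,p]}$, and the structure maps $\pi^{[p-1,p]}$, $\xi^{[p-1,p]}$ into the combinatorial language of dual graphs.

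First I would identify the normalization of the multiplicity stratum. As recalled in Section \ref{strata}, $D^{[p]} = \bigcup_{|E(\Gamma)|=p} D_\Gamma$, the $D_\Gamma$ are locally unions of linear subspaces of codimension $|E(\Gamma)|$, and $\sbarm_\Gamma/\aut(\Gamma)$ is canonically isomorphic to $\td_\Gamma$. Since distinct isomorphism classes of graphs give strata whose open parts are disjoint, one obtains
\begin{equation*}
\td^{[p]} \cong \bigsqcup_{\Gamma \in \mathcal G^p_{g,P}} \td_\Gamma.
\end{equation*}
At a generic point of $\td_\Gamma$ the set $E_p(y)$ of local branches of $D$ is canonically $E(\Gamma)$, and this identification descends through the $\aut(\Gamma)$-quotient; consequently the restriction of $\varepsilon^p$ to $\td_\Gamma$ is the $\aut(\Gamma)$-equivariant descent of $\wedge^p \QQ^{E(\Gamma)}$.

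Next I would identify the correspondence space and the structure maps. A point of $\td^{[p-1,p]}$ lying over a point of $\td_\Gamma$ is the datum of a distinguished local branch, equivalently (before quotienting by $\aut(\Gamma)$) of a distinguished edge of $\Gamma$. Grouping these edges by the isomorphism class $\Gamma'$ of the contracted graph, and recalling that $F \subset E(\Gamma)$ denotes the subset of edges whose contraction produces a graph isomorphic to $\Gamma'$, one obtains
\begin{equation*}
\td^{[p-1,p]} \cong \bigsqcup_{\substack{\Gamma \in \mathcal G^p_{g,P} \\ \Gamma' \in \mathcal G^{p-1}_{g,P},\, \Gamma'<\Gamma}} (\sbarm_\Gamma \times F)/\aut(\Gamma) = \bigsqcup_{\Gamma,\Gamma'} \td_{\Gamma',\Gamma}.
\end{equation*}
Under this identification, $\pi^{[p-1,p]}$ (forget the marked branch) is precisely $\widetilde\pi_{\Gamma',\Gamma}$, and $\xi^{[p-1,p]}$ (clutch the marked edge) is precisely $\widetilde\xi_{\Gamma',\Gamma}$. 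Substituting these identifications into Theorem \ref{del_spec} yields the shape of $E_1^{-p,q}$ and of $d_1$ claimed in the statement.

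The main obstacle will be the equivariant bookkeeping. One must check that the ``edge $\leftrightarrow$ local branch'' dictionary descends correctly through the $\aut(\Gamma)$-quotient, so that the component of $\td^{[p-1,p]}$ over $\td_\Gamma$ of contracted type $\Gamma'$ is exactly $(\sbarm_\Gamma \times F)/\aut(\Gamma)$ and not a coarser quotient; one must also verify that the orientation twist $\varepsilon^p$ transforms correctly under the Gysin operations so that $(\widetilde\xi_{\Gamma',\Gamma})_*(\widetilde\pi_{\Gamma',\Gamma})^*$ is a well-defined map on the twisted rational cohomology. Both are local checks in which the sign representation of the subgroup of $\aut(\Gamma)$ permuting the distinguished edges cancels in the expected way.
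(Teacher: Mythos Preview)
Your proposal is correct and follows exactly the paper's approach: the paper simply states that Theorem \ref{del_spec} holds in the orbifold setting and applies it to $(X,D,V)=(\sbarm_{g,P},\partial\sm_{g,P},\sm_{g,P})$, relying on the identifications of $\td_\Gamma$, $\td_{\Gamma',\Gamma}$, $\widetilde\pi_{\Gamma',\Gamma}$, and $\widetilde\xi_{\Gamma',\Gamma}$ already set up in Section~\ref{strata}. Your write-up is in fact more explicit than the paper's, which offers no proof beyond the remark preceding the theorem.
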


\medskip
\section{Computing $H^1(M_{g,n})$ and $H^2(M_{g,n})$}\label{H1H2}

As an application of Deligne's spectral sequence, we are going to deduce from Theorem \ref{coh_bar} two classical results on $H^1(M_{g,n};\QQ)$ and
$H^2(M_{g,n};\QQ)$. The first of these is due to Mumford (\cite {mum}, Theorem 1) for $n=0$ and to Harer (\cite{harer}, Lemma 1.1) for arbitrary $n$, while the second is due to Harer \cite{harer}.

\begin{theorem}\label{mupoha}The following hold true:
\begin{enumerate}
\item (Mumford, Harer) $ H^1(M_{g,n};\QQ)=0$ for any $g\ge 1$ and any $n$ such that $2g-2+n>0$.
\item (Harer) $ H^2(M_{g,n};\QQ)$ is freely generated by $\kappa_1,\psi_1,\dots,\psi_n$ for any $g\ge 3$ and any $n$. $H^2(M_{2,n};\QQ)$ is freely generated by $\psi_1,\dots,\psi_n$ for any $n$, while $H^2(M_{1,n};\QQ)$ vanishes for all $n$.
\end{enumerate}
\end{theorem}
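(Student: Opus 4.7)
The strategy is to apply Deligne's Gysin spectral sequence of Theorem \ref{deligne_mgnbar} to the pair $(\sbarm_{g,P},\partial\sm_{g,P})$ and read off $H^k(M_{g,P};\QQ)$ for $k=1,2$ from its $E_2=E_\infty$ page. Only finitely many entries $E_1^{-p,q}$ with $-p+q=k$ can contribute in these low degrees, and Theorem \ref{coh_bar} together with the pullback formulas of Lemma \ref{pullbacks} give complete control of the input cohomology groups as well as of the Gysin differentials between them. The whole argument proceeds by induction on $(g,|P|)$, with small base cases (e.g.\ $(1,1)$ and $(1,2)$) checked directly.

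\textbf{Vanishing of $H^1$.} In total degree $1$ only $E_1^{0,1}$ and $E_1^{-1,2}$ can be non-zero. The first is $H^1(\sbarm_{g,P};\QQ)=0$ by Theorem \ref{coh_bar}. The second is $\bigoplus_{\Gamma\in\mathcal{G}^1_{g,P}} H^0(\widetilde D_\Gamma;\varepsilon^1)$; each $\widetilde D_\Gamma$ is connected and the rank-one local system $\varepsilon^1$ is trivial (there is only one edge), so this is a direct sum of copies of $\QQ$, one per isomorphism class of one-edge stable dual graph. The differential $d_1\colon E_1^{-1,2}\to E_1^{0,2}=H^2(\sbarm_{g,P};\QQ)$ is the Gysin push-forward, sending the generator supported on $\widetilde D_\Gamma$ to the boundary class $\delta_\Gamma$. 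Inspection of Theorem \ref{coh_bar} shows that, for $g\ge 1$, the extra relations (\ref{kappag2}), (\ref{kappag1}), (\ref{psig1}) all involve $\kappa_1$ or some $\psi_p$ non-trivially, so no $\QQ$-linear combination of them is a pure boundary relation beyond the identifications (\ref{complrelbis}); once a representative per isomorphism class in $\mathcal{G}^1_{g,P}$ is chosen, the $\delta_\Gamma$'s are therefore linearly independent in $H^2(\sbarm_{g,P};\QQ)$. Hence $d_1$ is injective, $E_\infty^{-1,2}=0$, and $H^1(M_{g,P};\QQ)=0$.

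\textbf{Computing $H^2$.} In total degree $2$ the contributions are $E_\infty^{0,2}$, $E_\infty^{-1,3}$ and $E_\infty^{-2,4}$. The first is the quotient of $H^2(\sbarm_{g,P};\QQ)$ by the span of the boundary classes; using Theorem \ref{coh_bar}, this quotient is freely generated by $\kappa_1,\psi_1,\dots,\psi_n$ for $g\ge 3$, by $\psi_1,\dots,\psi_n$ for $g=2$ (as (\ref{kappag2}) forces $\kappa_1\equiv\psi\pmod{\delta}$), and is zero for $g=1$ (as (\ref{kappag1}) and (\ref{psig1}) absorb every $\kappa_1$ and $\psi_p$ into the boundary). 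The second group vanishes already on the $E_1$ page: $E_1^{-1,3}=\bigoplus_\Gamma H^1(\widetilde D_\Gamma;\QQ)$, each $\widetilde D_\Gamma$ is a finite quotient of $\sbarm_{g-1,P\cup\{q,r\}}$ or of $\sbarm_{a,A\cup\{q\}}\times\sbarm_{g-a,A^c\cup\{r\}}$, and Corollary \ref{h1.cell} combined with the Künneth formula yields $H^1(\widetilde D_\Gamma;\QQ)=0$. Thus $H^2(M_{g,P};\QQ)\cong E_\infty^{0,2}\oplus E_\infty^{-2,4}$, and it remains to show the second summand vanishes.

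\textbf{The main obstacle.} The hard step is the vanishing of $E_\infty^{-2,4}=\ker\bigl(d_1\colon E_1^{-2,4}\to E_1^{-1,4}\bigr)$. The source $\bigoplus_{\Gamma\in\mathcal{G}^2_{g,P}} H^0(\widetilde D_\Gamma;\varepsilon^2)$ is a sum of copies of $\QQ$, indexed by those two-edge stable graphs whose automorphism group does not swap the two edges (the rest being annihilated by $\varepsilon^2$), while the target $\bigoplus_{\Gamma'\in\mathcal{G}^1_{g,P}} H^2(\widetilde D_{\Gamma'};\QQ)$ is the second cohomology of the normalized codimension-one strata. The differential $d_1$ sends the generator attached to $\Gamma$ to a signed sum, over the two edges of $\Gamma$, of the classes of the codimension-one substrata of $\widetilde D_{\Gamma'}$ that $\Gamma$ defines. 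Viewing each $\widetilde D_{\Gamma'}$ as a finite quotient of a product of moduli spaces of strictly smaller complexity, one applies Theorem \ref{coh_bar} to each factor and the pullback formulas of Lemma \ref{pullbacks} to describe these divisor classes explicitly, reducing injectivity to a concrete but combinatorially involved piece of linear algebra about boundary divisors on products of $\sbarm_{g',n'}$'s. Once this is settled, $H^2(M_{g,P};\QQ)\cong E_\infty^{0,2}$; the generators listed in (2) are identified, and their freeness inside the quotient $H^2(\sbarm_{g,P};\QQ)/\langle\text{boundary}\rangle$ translates directly into their freeness in $H^2(M_{g,P};\QQ)$.
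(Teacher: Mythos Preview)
Your overall approach matches the paper's: apply Deligne's spectral sequence, identify the contributing entries in total degrees $1$ and $2$, and use Theorem~\ref{coh_bar} to compute them. Your treatment of $H^1$, of $E_\infty^{0,2}$, and of $E_\infty^{-1,3}$ is essentially identical to the paper's, and you correctly note that two-edge graphs whose automorphisms swap the edges do not contribute to $E_1^{-2,4}$.

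The gap is at the step you yourself flag as ``the main obstacle'': the injectivity of $d_1^{-2,4}$. You describe it as ``a concrete but combinatorially involved piece of linear algebra'' to be carried out with the pullback formulas of Lemma~\ref{pullbacks}, but you do not actually do it, and this mischaracterises what is required. The paper's argument here is short and does \emph{not} use Lemma~\ref{pullbacks}. The key idea is this: fix a pair $(a,A)$ with $a>0$ and $(a,A)\neq(g,P)$, and project the target $\bigoplus_i H^2(\td_i)$ onto the K\"unneth summand $H^2(\sbarm_{a,A\cup\{x\}})\otimes H^0(\sbarm_{g-a,A^c\cup\{y\}})$ of $H^2(\td_{a,A})$. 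Only the source summands indexed by two-edge graphs lying over $\Gamma_{a,A}$ survive this projection, and on them the composite map is precisely the Gysin map $d_1^{-1,2}$ for the smaller moduli space $\sbarm_{a,A\cup\{x\}}$; its injectivity follows from the linear independence of boundary classes in Theorem~\ref{coh_bar} exactly as in your $H^1$ argument, since $a>0$. Ranging over all admissible $(a,A)$ annihilates every coefficient. So the step you left open is not a brute-force computation via pullback formulas but a clean reduction to the already-established $H^1$ case on the codimension-one strata; once you see this, the proof is complete in a few lines.
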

\noindent Before giving the proof, let us remark that both parts of the theorem can be considerably strengthened. Teichm\"uller's theorem implies that the orbifold fundamental group of $\sm_{g,n}$ is the Teichm\"uller modular group $\Gamma_{g,n}$. On the other hand, it is known that $\Gamma_{g,n}$ equals its commutator subgroup for $g>2$. For $n=0$ this is Theorem 1 of \cite{pow}, while for arbitrary $n$ it is Lemma 1.1 of \cite{harer}. Thus the first integral homology group of $\sm_{g,n}$ vanishes for $g>2$. Likewise, the main result of \cite{harer} actually computes the second integral homology of $\sm_{g,n}$ for $g> 4$; this turns out to be free of rank $n+1$ for any $n$.

\begin{proof}[Proof of Theorem \ref{mupoha}]It is convenient to adopt the $M_{g,P}$ notation instead of the $M_{g,n}$ one. In the course of the proof we shall omit mention of the coefficients in cohomology, assuming rational coefficients throughout. We let $D_1,\dots,D_N$ be the components of the boundary of $\sbarm_{g,P}$, so that, in the notation of the previous section, $\cup D_i=D^{[1]}$. Clearly, $\td^{[1]}=\coprod \td_i$, where $\td_i$ stands for the normalization of $D_i$. We first compute $H^1(M_{g,P})$.
The only non-zero terms in Deligne's spectral sequence that are relevant to the computation of
$H^1(M_{g,P})$ are $E_1^{-1,2}$, $E_1^{0,2}$, $E_1^{0,1}$.
On the other hand, $E_1^{0,1}$ equals $H^1(\barm_{g,P})$ which, by
Theorem \ref{coh_bar}, vanishes. It follows that
\begin{equation*}
H^1(M_{g,P})=\ker\{d_1^{-1,2}\colon E_1^{-1,2}\to E_1^{0,2} \}
\end{equation*}
But $d_1^{-1,2}$ is the Gysin map
\begin{equation}
\bigoplus_{i=1}^{N} H^0(\widetilde D_i)\to H^2(\barm_{g,P})\label{Gysin}
\end{equation}
From Theorem \ref{coh_bar} we know that the boundary classes in $H^2(\barm_{g,P})$
are linearly independent as long as $g>0$. This means that $d_1^{-1,2}$ is injective. This proves the vanishing of $H^1(M_{g,P})$ when $g>0$.
We next consider the second rational cohomology group of $M_{g,P}$. The only non-zero terms in Deligne's spectral sequence that are relevant to the computation of
$H^2(M_{g,P})$ are $E_1^{-2,4}$, $E_1^{-1,4}$, $E_1^{-1,3}$ and $E_1^{0,2}$.
Since
\begin{equation*}
E_1^{-1,3}=\underset{i=1}{\overset{N} \oplus}H^1(\widetilde D_i)\,,
\end{equation*}
and since each $\widetilde D_i$ is the quotient of a product of moduli spaces of stable pointed curves by a finite group, the term
$E_1^{-1,3}$ vanishes by Theorem \ref{coh_bar}. We then have
\begin{equation*}
H^2(M_{g,P})=\ker (d_1^{-2,4})\oplus \operatorname{coker} (d_1^{-1,2})\,.
\end{equation*}
From Theorem \ref{coh_bar} it follows that
\begin{equation*}
\operatorname{coker}(d_1^{-1,2})=\left\{
\begin{matrix}
{\QQ}\langle \kappa_1,\psi_1,\dots,\psi_n\rangle \hfill&&\text{for } g>2,\hfill\\
{\QQ}\langle \psi_1,\dots,\psi_n\rangle \hfill&&\text{for } g=2,\hfill\\
0\hfill&&\text{for } g=1.\hfill
\end{matrix}
\right.
\end{equation*}
It remains to show that $\ker (d_1^{-2,4})=0$. The homomorphism
\begin{equation*}
d_1^{-2,4}\colon H^0(\widetilde D^{[2]};\varepsilon^2)\to H^2(\widetilde D^{[1]};\varepsilon^1)
\end{equation*}
is the Gysin map
\begin{equation}
\bigoplus_{\Gamma\in \mathcal{G}_{g,P}^{2}} H^0(\widetilde D_\Gamma;\varepsilon^2)
\to \bigoplus_{i=1}^{N} H^2(\widetilde D_i)\,.\label{Gysin_2}
\end{equation}
The graphs of $P$-pointed, genus $g$ stable curves with two double points
are all illustrated in Figure \ref{graphs}, where $C\cup D\cup B=A\cup B=P$ and $c+d+b=a+b=g$.
\begin{figure}
\includegraphics[height=130pt]{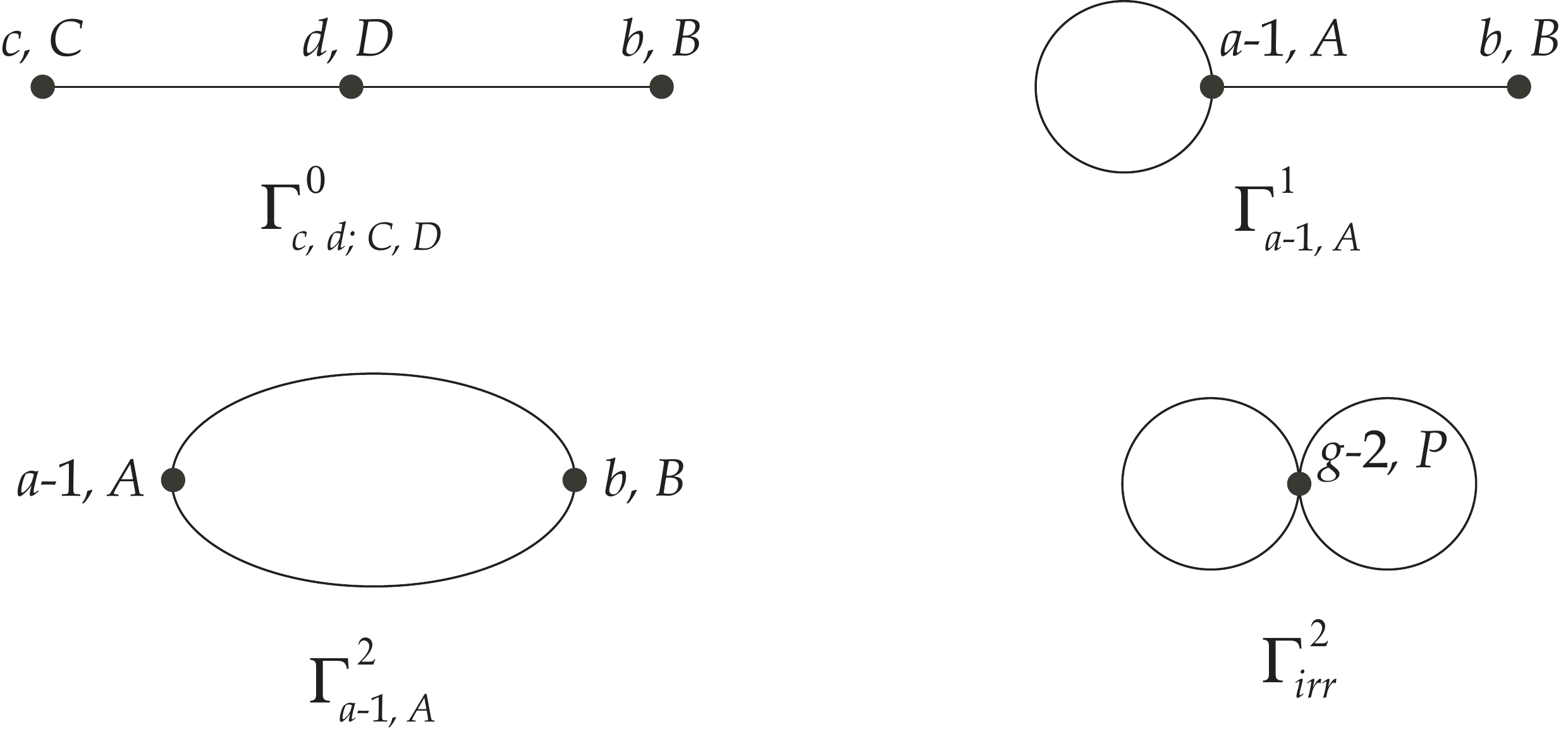}
\caption{}
\label{graphs}
\end{figure}
We now consider an element $\delta\in H^0(\widetilde D^{[2]};\varepsilon^2)$.
Notice that, when $D_\Gamma$ is part of the self-intersection of one of the $D_i$, we have $H^0(\widetilde D_\Gamma;\varepsilon^2)=0$ since $\varepsilon^2$ is not trivial. This rules out the graphs $\Gamma^0_{c,g-2c;\emptyset,P}, \Gamma^2_{a-1,A}, \Gamma^2_{irr}$. In all other cases
$H^0(\widetilde D_\Gamma;\varepsilon^2)=H^0(\widetilde D_\Gamma)$. We may then write
\begin{equation*}
\delta=\sum \alpha^0_{c,d; C,D}\,1_{\Gamma^0_{c,d; C,D}}+\sum \alpha^1_{a-1, A}\,1_{\Gamma^1_{a-1, A}}\,,
\end{equation*}
where $1_\Gamma$ is a generator of $H^0(\widetilde D_\Gamma)$, and where
$(d, D)\neq (g-2c,P)$.
Proceeding by contradiction, we assume that the image of $\delta$ under (\ref{Gysin_2}) is zero. Let $g=a+b$ and
assume that $a>0$. Fix a subset $A\subset P$. Set
$B=P\smallsetminus A$, and assume $(a,A)\neq(g,P)$. By K\"unneth's formula, one of the summands $H^2(\widetilde D_i)$ in the right-hand side of (\ref{Gysin_2}) contains a summand of the form
$H^2(\barm_{a,A\cup\{x\}})\otimes H^0(\barm_{b,B\cup\{y\}})$.
Let $\pi_{a,A}$ be the composition of the map (\ref{Gysin_2}) with the projection onto this summand.
This homomorphism vanishes identically on a certain number of summands in the left-hand side of (\ref{Gysin_2}).
Taking this into account,
$\pi_{a,A}$ can be viewed as a homomorphism
\begin{equation*}
\pi_{a,A}\colon \left(\bigoplus_{c+d=a,\,\, C\cup D=A } H^0\left(\widetilde D_{\Gamma^0_{c,d; C,D}}\right)\right)\oplus
H^0\left(\widetilde D_{\Gamma^1_{a-1; A}}\right)\to H^2(\barm_{a,A\cup\{x\}})\,.
\end{equation*}
The summands in the domain of $\pi_{a, A}$
play, with respect to $\barm_{a,A\cup\{x\}}$, the same role that the summands $H^0(\widetilde D_1),\dots, H^0(\widetilde D_N)$
play for $\barm_{g,P}$ in the Gysin homomorphism (\ref{Gysin}). Since $a>0$, the homomorphism $\pi_{a, A}$
is injective for all pairs $(a,A)$ with $a>0$ and $(a,A)\neq(g,P)$. It follows that the coefficients $\alpha^0_{c,d; C,D}$,
$\alpha^1_{a-1; A}$ are all zero.
\end{proof}

\bibliographystyle{amsplain}

\providecommand{\bysame}{\leavevmode\hbox to3em{\hrulefill}\thinspace}

\end{document}